\documentclass[11pt]{amsart}
\usepackage[pass]{geometry}
\usepackage{verbatim}
\usepackage{ulem}

\usepackage{amsfonts,amsmath,amssymb,amscd,amsthm}
\usepackage{bbm}
\usepackage{mathtools}
\usepackage{graphicx,subcaption}
\usepackage{xcolor}
\usepackage{tikz}
\usepackage{enumitem}
\DeclarePairedDelimiter{\abs}{\lvert}{\rvert}

\definecolor{cvdred}{RGB}{220, 50, 32}
\usetikzlibrary{arrows.meta}
\usepackage{lineno}
\usepackage{multirow}
\usepackage{dsfont}
\usepackage{rotating}
\usepackage{float}
\usepackage{placeins}
\usepackage{pdfpages}
\usepackage{setspace}
\usepackage{comment}

\allowdisplaybreaks
\usepackage{appendix}
\usepackage{chngcntr}
\usepackage[colorlinks=true, allcolors=blue]{hyperref}
\usepackage{nameref}
\makeatletter
\newcommand\setcurrentname[1]{\def\@currentlabelname{#1}}
\makeatother

\newcommand{\N}{\mathbb{N}}

\makeatletter
\def\thm@space@setup{%
  \thm@preskip=4pt
  \thm@postskip=4pt
}
\makeatother
\theoremstyle{plain}

\newtheorem{theorem}{Theorem}

\newtheorem{corollary}{Corollary}
\newtheorem{conjecture}{Conjecture}
\newtheorem{example}{Example}

\makeatletter
\renewenvironment{proof}[1][\proofname]{%
  \par
  \pushQED{\qed}%
  \normalfont
  \topsep=4pt
  \trivlist
  \item[\hskip\labelsep\itshape #1\@addpunct{.}]%
  \ignorespaces
}{%
  \popQED
  \endtrivlist
  \@endpefalse
}

\newenvironment{evidence}
  {\begin{proof}[Evidence]}
  {\end{proof}}
\makeatother
\theoremstyle{definition}
\newtheorem{definition}{Definition}
\title{Computer-assisted results for \(n\)-sided non-transitive dice taking face values in \([n]\)}
\author{Terry Easlick}
\address{D\'epartement de math\'ematiques et de statistique, Universit\'e de Montr\'eal, Montr\'eal, Canada}
\thanks{Correspondence: \texttt{terry.easlick@umontreal.ca}.}
\date{\today}
\subjclass[2020]{Primary 05C20; Secondary 05C38, 05-08, 60C05.}
\keywords{ Non-transitive dice, directed graphs, strongly connected components, directed cycles, computer-assisted proof, Tarjan's algorithm, combinatorial probability.}
\begin{document}
\begin{abstract}
We consider non-transitivity among dice with \(n\) sides restricted to taking face values in the set \(\{1,\ldots,n\}\). We give a computer-assisted proof that \(n=6\) is the smallest \(n\ge3\) which admits a non-transitive cycle of maximum length \(n\) with uniform and minimal winning advantage. For \(n=6\), we identify the two unique non-transitive cycles of length \(6\). We present computational evidence supporting a conjecture that \(n=6\) is the largest \(n\ge3\) which admits a non-transitive cycle of maximum length \(n\) with uniform and minimal winning advantage. 
\end{abstract}
\maketitle
\section{Introduction}
Non-transitive dice are fascinating mathematical objects that have attracted attention and have been studied by various researchers (cf. \cite{Gardner2001,AngelDavis2017,Grime2017,SchaeferSchweig2017,BuhlerGrahamHales2018,HurKim2021,KimEtAl2024}). In this paper, we are concerned specifically with non-transitive dice which have \(n\)-many sides and can only take values from the set \(\{1,\ldots,n\}\). Hereafter, all dice are assumed fair and independent with regard to outcomes of competing dice tosses. 
A standard \(n\)-sided die is such that each face takes a value from \(1\) to \(n\) for some natural number \(n \in \mathbb{N}\) with \(n\ge 3\), and each face value appears exactly once, and each face has equal probability of \(\frac{1}{n}\) to be a result in a die toss. We denote by \([n]\) the set \(\{1,\ldots,n\}\) with \(n\ge 3\). 
\begin{definition}
We will call a \textit{non-standard \(n\)-sided die} to be a die taking face values in \([n]\) such that face values may be repeated. Each face still possesses probability \(\frac{1}{n}\) to be the result of a toss.
\end{definition}
We will represent dice by \(n\)-tuples such that the coordinates are placed in non-decreasing order. 
\begin{example}
    The die in ordered tuple form \((1,2,3,4,5,6)\) is standard and will also occur in the set of non-standard dice; whereas, the die \((1,2,3,3,4,4)\) is non-standard. In particular, the set of standard \(n\)-sided dice is a singleton.
\end{example}

Given two non-standard \(n\)-sided dice, \(D_{\mathbf{x}} \coloneq (x_1,x_2,\ldots ,x_n)\) and \(D_{\mathbf{y}} \coloneq (y_1,y_2,\ldots ,y_n)\) for \(n\ge 3\), we consider the two cases \newline
1) If \(n\) is even, then there is some \(k\ge 2\) such that \(n=2k\). We take the following dice
\begin{align*}
    D_{\mathbf{x}} = (\underbrace{1,\ldots,1}_{k-1},2,\underbrace{3,\ldots,3}_{k})\quad\mathrm{and}\quad D_{\mathbf{y}} = (1,\underbrace{2,\ldots,2}_{2k-1}),
\end{align*}
then both are valid non-standard \(n\)-sided dice and comparing pairwise when face values of \(D_{\mathbf{x}}\) are strictly greater than the face values of \(D_{\mathbf{y}}\) yields
\begin{align*}
    k\cdot 2k + 1 = 2\left(\frac{n}{2}\right)^2 + 1  =\frac{n^2}{2}+1.
\end{align*}
2) If \(n\) is odd, then there is some \(k\ge 1\) such that \(n=2k+1\). We take the following dice
\begin{align*}
    D_{\mathbf{x}} = (\underbrace{1,\ldots,1}_{k},2,\underbrace{3,\ldots,3}_{k})\quad\mathrm{and}\quad D_{\mathbf{y}} = (\underbrace{1,\ldots,1}_{k+1},\underbrace{2,\ldots,2}_{k}),
\end{align*}
then both are valid non-standard \(n\)-sided dice and comparing pairwise when the face values of \(D_{\mathbf{x}}\) are strictly greater than \(D_{\mathbf{y}}\) yields
\begin{align*}
    k\cdot (2k+1) + (k+1) = n\frac{n-1}{2} + \frac{n-1}{2} + 1 = \frac{n^2+1}{2}. 
\end{align*}

Now, we can verify from both cases the resulting values are integers. Given such dice there are \(n^2\) equally likewise pairwise comparisons. Thus an algebraic evaluation yields that the smallest attainable winning probability strictly greater than \(\frac{1}{2}\) in favour of \(D_{\mathbf{x}}\) over \(D_{\mathbf{y}}\) is
\begin{align}\label{cases}
    \begin{cases}
        \frac{1}{2}+\frac{1}{n^2}, \quad n\ge3,n\bmod{2}=0, \\
        \frac{1}{2}+\frac{1}{2n^2}, \quad n\ge3,n\bmod{2}=1.
    \end{cases}
\end{align}

Let \(D_{\mathbf{x}}\) and \(D_{\mathbf{y}}\) be two non-standard \(n\)-sided dice. We say that \(D_{\mathbf{x}}\) \textit{beats} \(D_{\mathbf{y}}\) if the outcome of an independent toss of both dice results in the face value of \(D_{\mathbf{x}}\) being strictly greater than the face value of \(D_{\mathbf{y}}\).
\begin{definition}
A \textit{non-transitive cycle} of length \(k\ge 3\) is a sequence of distinct non-standard \(n\)-sided dice \(D_1,D_2,\ldots,D_k\) such that
\begin{align}\label{nontr}
    \mathbb{P}(D_i\text{ beats }D_{i+1})>\frac{1}{2}
\end{align}
for every \(i\in\{1,\ldots,k\}\), where \(D_{k+1}=D_1\). We call it a \textit{uniform} winning advantage when each probability in (\ref{nontr}) is identical and \textit{minimal} if it is the smallest attainable probability greater than \(\frac{1}{2}\) for the given \(n\).
\end{definition}

\begin{definition}
A directed graph \(G=(V,E)\) with no loops is called \textit{strongly connected} if for every pair of distinct vertices \(u,v\in V\), there is a directed path from \(u\) to \(v\) and from \(v\) to \(u\).

Given a directed graph \(G=(V,E)\) with no loops, a maximal subset \(S \subseteq V\) is called a \textit{strongly connected component} (SCC) which induces a directed subgraph such that for every pair of distinct vertices \(u, v\) in the directed subgraph, there is a directed path from \(u\) to \(v\) and from \(v\) to \(u\) both of which lie entirely in the subgraph induced by \(S\). The trivial SCCs are those for which \(S\) is a singleton.
\end{definition}
Strong connectivity defines an equivalence relation on
the vertex set \(V\) of a directed graph \(G\).
Thus, the SCCs are the equivalence classes of this relation
and partition \(V\). Namely, for two distinct SCCs \(S_1\) and \(S_2\) there may either be an out-edge or an in-edge but not in both directions, there may also be no edges between them. If an SCC has at least two vertices then it possesses a directed cycle; moreover, every directed cycle will be entirely within some SCC.

\section{Uniform and Minimal Non-Transitive cycle for \(n=6\) }
\begin{theorem}\label{thm1} \(n=6\) is the smallest \(n\ge3\) such that the set of non-standard \(n\)-sided dice admits a non-transitive cycle of maximum length \(n\) with uniform and minimal winning advantage.
\end{theorem}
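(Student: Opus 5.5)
The proof has two parts: an explicit cycle for \(n=6\), and an exhaustive computer search ruling out \(n=3,4,5\).

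For each \(n\in\{3,4,5,6\}\), enumerate the set \(V_n\) of non-standard \(n\)-sided dice. These are the non-decreasing \(n\)-tuples over \([n]\), so \(|V_n|=\binom{2n-1}{n}\), which gives \(10, 35, 126, 462\) dice respectively. For each ordered pair \((D_{\mathbf{x}},D_{\mathbf{y}})\) compute the integer win count
\[
W(\mathbf{x},\mathbf{y})=\#\{(i,j): x_i>y_j\}.
\]
By (\ref{cases}), the minimal winning advantage is attained exactly when \(W=\frac{n^2}{2}+1\) for even \(n\), and \(W=\frac{n^2+1}{2}\) for odd \(n\). The lower bound in (\ref{cases}) holds because \(W\) is an integer exceeding \(n^2/2\), and the displayed pairs show it is attained. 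Build the directed graph \(G_n=(V_n,E_n)\) with an edge \(D_{\mathbf{x}}\to D_{\mathbf{y}}\) exactly when \(W(\mathbf{x},\mathbf{y})\) equals this minimal value. A non-transitive cycle with uniform and minimal advantage is then precisely a directed cycle in \(G_n\) through distinct vertices, and the theorem becomes: \(G_n\) has no directed cycle of length \(n\) for \(n=3,4,5\), and \(G_6\) has one of length \(6\).

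The existence part for \(n=6\) is a certificate. Exhibit six distinct dice \(D_1,\dots,D_6\) found by the search and verify the six values \(W(D_i,D_{i+1})=19\), i.e.\ probability \(\frac12+\frac1{36}\). Each is a \(36\)-comparison count that can be checked by hand.

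For the nonexistence part, run Tarjan's algorithm on \(G_n\) for \(n=3,4,5\) to get its SCCs. By the remark following the definition of SCCs, every directed cycle lies inside a single SCC, so any SCC with fewer than \(n\) vertices can be discarded. If no SCC has at least \(n\) vertices, the claim follows at once. For any SCC that is large enough, run a backtracking search for simple directed cycles of length exactly \(n\) inside its induced subgraph. Fix the smallest-indexed vertex of the cycle as its start to avoid counting rotations. The restricted subgraphs are tiny (at most \(126\) vertices in total), so this is cheap.

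The main obstacle is making the exhaustive negative result trustworthy rather than the computation itself. The concerns are the correctness of the enumeration (every multiset counted once), the exact threshold (integer arithmetic, the parity cases in (\ref{cases})), and the cycle search. To address this, I would cross-validate with an independent check. For \(n=3,4,5\), compute the trace of \(A^n\) for the adjacency matrix \(A\) of \(G_n\), and count cycles through repeated vertices separately. Alternatively, brute-force all ordered \(n\)-tuples within each SCC, which is feasible at these sizes. I expect small \(n\) to fail quickly because the minimal-advantage graphs are sparse, with SCCs of size below \(n\). If instead some SCC is large but contains no cycle of length \(n\), the backtracking step carries the argument.
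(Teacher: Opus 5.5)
Your reduction drops the word \emph{maximum}. You restate the theorem as ``$G_n$ has no directed cycle of length $n$ for $n=3,4,5$, and $G_6$ has one of length $6$,'' and settle $n=6$ with a six-edge certificate. In this paper, ``a non-transitive cycle of maximum length $n$'' means that $n$ is the \emph{longest} length of a uniform, minimal-advantage cycle. That is why, for Conjecture~\ref{con1}, exhibiting cycles longer than $n$ for $n=7,\dots,10$ counts as those $n$ failing. So for $n=6$ you must also prove that $G_6$ has no directed cycle of length $\ge 7$. That is a universal statement, and no certificate of a $6$-cycle can establish it. It needs the same exhaustive machinery you reserve for $n=3,4,5$.

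The repair fits your framework: run Tarjan on $G_6$ as well. The paper finds $450$ SCCs, only two of them nontrivial, each with $7$ vertices and $10$ edges. Hence every cycle has length at most $7$. The paper then rules out $7$-cycles: its complete enumeration of simple cycles in each SCC gives three $3$-cycles and one $6$-cycle. You can also see this by hand. In SCC~1, both $(1,2,2,3,6,6)$ and $(2,2,2,3,6,6)$ have $(1,3,4,4,4,4)$ as their only in-neighbour. In SCC~2, both $(1,1,4,5,5,5)$ and $(1,1,4,5,5,6)$ have $(3,3,3,3,4,6)$ as their only out-neighbour. So neither SCC is Hamiltonian.

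Apart from this, your argument matches the paper's: the integrality justification of the threshold, the edge rule, and the SCC-based exclusion of $n=3,4,5$ (where all SCCs turn out to be singletons). That exclusion remains valid under the ``maximum'' reading, since having no $n$-cycle at all rules out $n$ as the maximum length.
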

\begin{proof}
We utilize (\ref{cases}) to consider what the minimal probability is depending on whether \(n\) is even or odd. The proof proceeds by constructing the necessary graphs and computer-assistance aids in enumerating all possibilities to exhaust the search space. 

\noindent\textbf{Construction of vertex set \(V\)}\newline
 For \(n\), a non-standard die is constructed by choosing \(n\) face values where each face value is from \([n]\). Since order does not matter, this is a combination with repetition counting problem. Thus, there are \(\binom{n+(n-1)}{n}\) many dice. Additionally, each such die has a unique non-decreasing ordered \(n\)-tuple representation. Hence, it is feasible to compute a list of all possible dice using a lexicographic ordering. 
 This list gives the vertex set for a directed graph where each vertex can be denoted by \(D_{\mathbf{x}}\) where \(\mathbf{x}\) is the non-decreasing ordered \(n\)-tuple. 
 
\noindent\textbf{Construction of edge set \(E\)}\newline
We utilize the following characterizations
for even \(n\)
    \begin{align}\label{favour}
        \mathbb{P}(D_x \,\mathrm{ beats }\, D_y) = \frac{1}{2}+\frac{1}{n^2} = \frac{n^2/2+1}{n^2} &\iff \sum_{i=1}^n\sum_{j=1}^n \mathds{1}_{\{x_i > y_j\}} = \frac{n^2}{2}+1,
    \end{align}
and for odd \(n\)
    \begin{align}\label{favour2}
        \mathbb{P}(D_x \,\mathrm{ beats }\, D_y) = \frac{1}{2}+\frac{1}{2n^2} = \frac{n^2+1}{2n^2} &\iff \sum_{i=1}^n\sum_{j=1}^n \mathds{1}_{\{x_i > y_j\}} = \frac{n^2+1}{2}.
    \end{align}
    
 Thus for each pair \((D_{\mathbf{x}},D_{\mathbf{y}}) \in V \times V\), we form a directed edge \(D_{\mathbf{x}} \to D_{\mathbf{y}} \) whenever the condition in (\ref{favour}) or (\ref{favour2})  is satisfied.

 Using the constructed vertex and and edge sets, we can construct the directed graphs. For a given \(n\) we will denote the graph by \(G_n\).

 \noindent\textbf{Computer-assistance}\newline
 Proceeding by exhaustive search, we use a script written in the Julia programming language which utilizes the readily available package \texttt{Graphs.jl} (cf. Appendix A), we construct the directed graph \(G=(V,E)\) on all possible non-standard \(n\)-sided dice which satisfy condition (\ref{favour}) or (\ref{favour2}) depending on the parity of \(n\). The script instantiates Tarjan's algorithm (cf. \cite{Tarjan1972}) to find all SCCs in \(G\). 
For verification purposes, the program outputs the following
\begin{enumerate}[topsep=3pt, itemsep=0pt]
    \item The total number of non-standard dice; thus, the number of vertices in \(G\).
    \item The total number of edges in \(G\) with verification each directed edge occurs only when the double sum in condition (\ref{favour}) or (\ref{favour2})  is satisfied.
    \item The total number of SCCs found by Tarjan's algorithm and the number of vertices and edges within each SCC found.
    \item Verification that the SCCs partition the constructed vertex set \(V\) and that each directed cycle lies entirely within an SCC.
    \item Each non-standard dice as the non-decreasing ordered \(n\)-tuple representation in each SCC which possesses at least \(n\)-many vertices.
    \item The complete list of all directed cycles within each SCC which has at least \(n\)-many vertices. 
\end{enumerate}

\noindent\textbf{Case for \(n=3\)}\newline
 For \(n=3\) there are \(\binom{3+2}{3} = 10\) total non-standard dice. The output of our program yields there are \(10\) distinct SCCs; namely, each SCC is trivial and thus none may contain a directed cycle, let alone one of length \(3\) (cf. Appendix B). In this case, the constructed graph \(G_3\) possesses \(10\) vertices and \(3\) directed edges.
 
\noindent\textbf{Case for \(n=4\)}\newline
 For \(n=4\) there are \(\binom{4+3}{4} = 35\) total non-standard dice. The output of our program yields there are \(35\) distinct SCCs; namely, each SCC is trivial and thus none may contain a directed cycle, let alone one of length \(4\) (cf. Appendix C). \(G_4\) contains \(35\) vertices and \(67\) directed edges.

\noindent\textbf{Case for \(n=5\)}\newline
   For \(n=5\) there are \(\binom{5+4}{5} = 126\) total non-standard dice. The output of our program yields there are \(126\) distinct SCCs; namely, each SCC is trivial and thus none may contain a directed cycle, let alone one of length \(5\) (cf. Appendix D). \(G_5\) contains \(126\) vertices and \(620\) directed edges.

\noindent\textbf{Case for \(n=6\)}\newline
 For \(n=6\), our program found \(\binom{11}{6} = 462\) non-standard \(6\)-sided dice. The constructed graph \(G_6\) has \(\abs{V}=462\) and \(\abs{E}=5491\). The number of SCCs is \(450\). Two SCCs each possess \(7\) vertices and \(10\) edges. The remaining \(448\) SCCs are all singleton vertices. Hereafter, we refer to these two non-trivial SCCs as SCC \(1\) and SCC \(2\) (cf. Appendix E).\newline

\begin{center}

\begin{tikzpicture}[
    >=Stealth,
    vertex/.style={
        draw,
        rounded corners,
        align=center,
        font=\scriptsize,
        inner sep=3pt
    },
    edge/.style={->, thick},
    cycle/.style={->, very thick, red}
]

\node[vertex] (v88)  at (0,4)
    { $v_{88}$\\$(1,1,2,5,5,5)$};

\node[vertex] (v217) at (4,2)
    {$v_{217}$\\$(1,3,4,4,4,4)$};

\node[vertex] (v277) at (4,-2)
    {$v_{277}$\\$(2,2,2,3,6,6)$};

\node[vertex] (v89) at (0,-4)
    {$v_{89}$\\$(1,1,2,5,5,6)$};

\node[vertex] (v207) at (-4,-2)
    {$v_{207}$\\$(1,3,3,4,4,4)$};

\node[vertex] (v291) at (-4,2)
    {$v_{291}$\\$(2,2,3,3,3,6)$};

\node[vertex] (v151) at (0,0)
    {$v_{151}$\\$(1,2,2,3,6,6)$};

\draw[edge] (v88)  to[bend right=12] (v207);
\draw[edge] (v207) to (v291);
\draw[edge] (v291) to (v88);

\draw[edge] (v88)  to (v217);
\draw[edge] (v217) to (v151);
\draw[edge] (v151) to (v88);

\draw[edge] (v217) to (v277);
\draw[edge] (v277) to (v89);
\draw[edge] (v89)  to (v207);

\draw[edge] (v89) to[bend left=18] (v217);

\draw[cycle] (v88)  to (v217);
\draw[cycle] (v217) to (v277);
\draw[cycle] (v277) to (v89);
\draw[cycle] (v89)  to (v207);
\draw[cycle] (v207) to (v291);
\draw[cycle] (v291) to (v88);

\end{tikzpicture}
\end{center}

Above is the diagram of SCC \(1\) for \(n=6\) which has \(7\) vertices and \(10\) edges. The directed cycle of length \(6\) is shown in red. The vertex number and its non-decreasing ordered \(6\)-tuple are given. \newline

\begin{center}
    
\begin{tikzpicture}[
    >=Stealth,
    vertex/.style={
        draw,
        rounded corners,
        align=center,
        font=\scriptsize,
        inner sep=3pt
    },
    edge/.style={->, thick},
    cycle/.style={->, very thick, red}
]

\node[vertex] (v118) at (0,4)
    {$v_{118}$\\$(1,1,4,5,5,5)$};

\node[vertex] (v385) at (4,2)
    {$v_{385}$\\$(3,3,3,3,4,6)$};

\node[vertex] (v286) at (4,-2)
    {$v_{286}$\\$(2,2,2,5,6,6)$};

\node[vertex] (v235) at (0,-4)
    {$v_{235}$\\$(1,4,4,4,5,5)$};

\node[vertex] (v391) at (-4,-2)
    {$v_{391}$\\$(3,3,3,4,4,6)$};

\node[vertex] (v160) at (-4,2)
    {$v_{160}$\\$(1,2,2,5,6,6)$};

\node[vertex] (v119) at (0,0)
    {$v_{119}$\\$(1,1,4,5,5,6)$};

\draw[edge] (v118) to (v385);
\draw[edge] (v385) to[bend right=12] (v160);
\draw[edge] (v160) to (v118);

\draw[edge] (v385) to (v286);
\draw[edge] (v286) to (v235);
\draw[edge] (v235) to (v391);
\draw[edge] (v391) to (v160);

\draw[edge] (v286) to (v119);
\draw[edge] (v119) to (v385);

\draw[edge] (v391) to[bend left=18] (v286);

\draw[cycle] (v118) to (v385);
\draw[cycle] (v385) to (v286);
\draw[cycle] (v286) to (v235);
\draw[cycle] (v235) to (v391);
\draw[cycle] (v391) to (v160);
\draw[cycle] (v160) to (v118);

\end{tikzpicture}
\end{center}

Similarly to SCC \(1\), the diagram of SCC \(2\) for \(n=6\) has \(7\) vertices and \(10\) edges. The directed cycle of length \(6\) is shown in red. The vertex number and its non-decreasing ordered \(6\)-tuple are given.

Both SCC \(1\) and SCC \(2\) contain non-transitive cycles of length \(6\). Since these are the only nontrivial SCCs and each contains exactly \(7\) vertices, it is impossible to have a non-transitive cycle greater than \(7\). Moreover, given the complete enumeration, it is possible to verify by hand both these SCCs do not contain a non-transitive cycle of length \(7\). Therefore, we have established that \(n=6\) satisfies the theorem statement. The program exhausts the search space using the prescribed construction of the necessary directed graphs thus the proof is complete (cf. Appendices A,B,C,D,E).
\end{proof}

\begin{definition}
    We call a mapping \(f\) from a directed graph \(G = (V,E)\) to itself an \textit{antiautomorphism} if maps the set \(f: V \to V\) is a bijection,  and for every \(u,v \in V\) we have 
    \begin{align*}
        u\to v\in E \iff f(v)\to f(u)\in E.
    \end{align*}
    We call it an \textit{involutive antiautomorphism} if it is self-inverse.
\end{definition}

Now we define
\begin{align}\label{invo}
    f(D_{\mathbf{x}})\coloneq f(x_1,\ldots,x_6) = (7-x_6,\ldots,7-x_1).
\end{align}

\begin{corollary}
    The map \(f\) is an involutive antiautomorphism of \(G_6\). In particular, it interchanges SCC 1 and SCC 2.
\end{corollary}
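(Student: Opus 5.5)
We first check that \(f\) is well defined and involutive, then that it reverses edges, and finally track where it sends the two non-trivial SCCs.

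\textbf{Step 1: \(f\) maps \(V\) to \(V\) and is an involution.} If \(x_1\le\cdots\le x_6\) with \(x_i\in[6]\), then \(7-x_6\le\cdots\le 7-x_1\) and each \(7-x_i\in[6]\). So \(f(D_{\mathbf{x}})\) is again a non-decreasing \(6\)-tuple, hence a vertex of \(G_6\). Applying \(f\) twice returns \((7-(7-x_1),\ldots,7-(7-x_6))=\mathbf{x}\). Thus \(f\circ f=\mathrm{id}\), and in particular \(f\) is a bijection of \(V\).

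\textbf{Step 2: \(f\) reverses edges.} Write \(\mathbf{x}'=f(\mathbf{x})\) and \(\mathbf{y}'=f(\mathbf{y})\). As multisets, the faces of \(f(D_{\mathbf{x}})\) are \(\{7-x_i\}\). The relation \(y'_j> x'_i\) means \(7-y_j>7-x_i\), which holds if and only if \(x_i>y_j\). Summing over all \(36\) pairs \((i,j)\) gives
\begin{align*}
\sum_{i,j}\mathds{1}_{\{7-y_j>7-x_i\}}=\sum_{i,j}\mathds{1}_{\{x_i>y_j\}}.
\end{align*}
So the double sum in (\ref{favour}) for the ordered pair \((D_{\mathbf{x}},D_{\mathbf{y}})\) equals the one for \((f(D_{\mathbf{y}}),f(D_{\mathbf{x}}))\). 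By (\ref{favour}) with \(n=6\), this gives \(D_{\mathbf{x}}\to D_{\mathbf{y}}\in E\) if and only if \(f(D_{\mathbf{y}})\to f(D_{\mathbf{x}})\in E\). This is exactly the antiautomorphism condition.

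\textbf{Step 3: \(f\) permutes the SCCs and fixes neither non-trivial one.} An antiautomorphism sends a directed path \(u\to\cdots\to v\) to a directed path \(f(v)\to\cdots\to f(u)\). Hence \(u,v\) are mutually reachable if and only if \(f(u),f(v)\) are, so \(f\) maps each SCC bijectively onto an SCC of the same size. By the proof of Theorem \ref{thm1}, SCC 1 and SCC 2 are the only SCCs with \(7\) vertices, so \(f\) either fixes each of them or swaps them.

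To rule out the first option, a single vertex suffices. Take \(v_{88}=(1,1,2,5,5,5)\in\) SCC 1. Then
\begin{align*}
f(v_{88})=(2,2,2,5,6,6)=v_{286}\in \text{SCC } 2.
\end{align*}
Hence \(f(\text{SCC }1)=\text{SCC }2\), and since \(f\) is an involution, \(f(\text{SCC }2)=\text{SCC }1\).

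As a sanity check, \(f\) sends the red \(6\)-cycle of SCC 1 to the reversed red cycle of SCC 2. For example, \(f(v_{217})=f(1,3,4,4,4,4)=(3,3,3,3,4,6)=v_{385}\). The edge \(v_{88}\to v_{217}\) then becomes \(v_{385}\to v_{286}\), which is indeed an edge of SCC 2.

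There is no real obstacle in this argument. The only point needing care is the direction reversal in Step 2, and the identification in Step 3 rests on the enumeration from Theorem \ref{thm1}.
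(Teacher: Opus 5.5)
Your proof is correct and follows essentially the paper's route. You check involutivity directly, obtain edge reversal from the termwise identity $\mathds{1}_{\{7-y_j>7-x_i\}}=\mathds{1}_{\{x_i>y_j\}}$ in the double sum, and get the SCC swap by computing where $f$ sends vertices of SCC 1. You are slightly more careful than the paper in three places: you verify that $f$ maps $V$ into $V$, you justify why an antiautomorphism permutes SCCs, and you note that the single image $f(v_{88})=v_{286}$ already forces $f(\text{SCC }1)=\text{SCC }2$, where the paper lists the images of all six vertices of the $6$-cycle.
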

\begin{proof}
Firstly,
\begin{align*}
    f(f(D_{\mathbf{x}})) = f((7-x_6,\ldots,7-x_1)) = (x_1,\ldots,x_6) = D_{\mathbf{x}}.
\end{align*}
Thus it is involutive.

Secondly, we have for \(G_6 = (V,E)\)
\begin{align*}
f(D_{\mathbf{y}}) \to f(D_{\mathbf{x}}) & \iff 
\sum_{i=1}^{6}\sum_{j=1}^{6}
\mathds{1}_{\{7-y_{7-i}>7-x_{7-j}\}} = 19 \\
& \iff \sum_{i=1}^{6}\sum_{j=1}^{6}
\mathds{1}_{\{x_{7-j}>y_{7-i}\}} = 19 \\
&\iff \sum_{i=1}^{6}\sum_{j=1}^{6}
\mathds{1}_{\{x_i>y_j\}} = 19 \\
&\iff D_{\mathbf{x}} \to D_{\mathbf{y}}
\end{align*}
Thus it reverses edges within \(G_6\).

Lastly, an antiautomorphism must send SCCs to SCCs and here we see \(f\) sends SCC 1 to SCC 2, and vice versa.
\begin{align*}
(1,1,2,5,5,5)
&\mapsto
(7-5,\,7-5,\,7-5,\,7-2,\,7-1,\,7-1)
=(2,2,2,5,6,6),\\
(1,3,4,4,4,4)
&\mapsto
(7-4,\,7-4,\,7-4,\,7-4,\,7-3,\,7-1)
=(3,3,3,3,4,6),\\
(2,2,2,3,6,6)
&\mapsto
(7-6,\,7-6,\,7-3,\,7-2,\,7-2,\,7-2)
=(1,1,4,5,5,5),\\
(1,1,2,5,5,6)
&\mapsto
(7-6,\,7-5,\,7-5,\,7-2,\,7-1,\,7-1)
=(1,2,2,5,6,6),\\
(1,3,3,4,4,4)
&\mapsto
(7-4,\,7-4,\,7-4,\,7-3,\,7-3,\,7-1)
=(3,3,3,4,4,6),\\
(2,2,3,3,3,6)
&\mapsto
(7-6,\,7-3,\,7-3,\,7-3,\,7-2,\,7-2)
=(1,4,4,4,5,5).
\end{align*}
where the LHS side is the directed \(6\)-cycle from SCC \(1\) read from top to bottom and the RHS side is the directed \(6\)-cycle from SCC \(2\) read from bottom to top.
\end{proof}

\section{Conjecture for \(n=6\) and Evidence for \(n>6\)}
\begin{conjecture}\label{con1}
\(n=6\) is the largest \(n\ge3\) such that the set of non-standard \(n\)-sided dice admits a non-transitive cycle of maximum length \(n\) with uniform and minimal winning advantage.
\end{conjecture}
\begin{evidence}
We use a similar approach to that done for the proof of Theorem~\ref{thm1}. We construct the directed graphs for \(n=7,8,9,10\) and find the SCCs of the resulting graphs.

\noindent\textbf{Computer-assistance}\newline
As before we use a program written in the Julia programming language and the package \texttt{Graphs.jl} (cf. Appendix F). Again by way of exhaustive search, we use the program to construct the directed graph \(G_n\) for the given \(n\). The construction of edges follow from using the following criteria
\begin{align}\label{favour3}
       \sum_{i=1}^n\sum_{j=1}^n \mathds{1}_{\{x_i > y_j\}} = \sum_{i=1}^n\abs{\{j : x_j > i\}}\abs{\{j : y_j = i\}} 
\end{align} This provided a speed-up in obtaining the results (cf. the Technology Disclosure below). Again,
Tarjan's algorithm (cf. \cite{Tarjan1972}) is utilized to find the SCCs. Once the SCCs are found, the program searches each SCC with at least \(n+1\) vertices using a depth-first search (DFS) until it finds a directed cycle which has length \(n+1\) or greater. Given the non-standard dice are represented in non-decreasing ordered \(n\)-tuples placed in a lexicographical ordering, the program chooses the first vertex label under this ordering that occurs in the SCC possessing at least \(n+1\) vertices and performs the DFS, if no such non-transitive cycle is found it moves on to the second vertex in this ordering, and so forth until such a non-transitive cycle is either found or not found. The program stops as soon as a non-transitive cycle of length \(n+1\) or greater is found; otherwise it exhausts all cases. If no SCC is found to contain at least \(n+1\) vertices then there is no directed cycle of length at least \(n+1\). The output of the program is as follows
\begin{enumerate}[topsep=3pt, itemsep=0pt]
     \item The total number of non-standard dice; thus, the number of vertices in \(G\).
    \item The total number of edges in \(G\), with verification each directed edge occurs only when the sum on the RHS in condition (\ref{favour3}) is equal to the value given in (\ref{favour}) or (\ref{favour2}), dependent upon the parity of \(n\).
    \item The total number of SCCs found by Tarjan's algorithm and the number of vertices and edges within each SCC found.
    \item Verification that the SCCs partition the constructed vertex set \(V\).
    \item Return the first found directed cycle of length at least n+1.
 \end{enumerate}

 Following are cases for \(n=7,8,9,10\). In each case we find suitable non-transitive cycles which are at least \(n+1\) in length.
 
\noindent\textbf{Case for \(n=7\)}\newline
The graph \(G_7\) has \(1,716\) vertices and \(89,153\) directed edges. It possesses \(636\) SCCs of which one has \(1081\) vertices and \(53,389\) edges. All other SCCs are trivial (cf. Appendix G). Here is a non-transitive cycle of length \(14\)

\begin{center}
\begin{tikzpicture}[
    >=Stealth,
    die/.style={
        draw,
        rounded corners,
        font=\scriptsize,
        inner sep=3pt,
        minimum width=4.1cm
    },
    edge/.style={->, thick},
    x=4.5cm,
    y=1.5cm
]

% Row 1
\node[die] (d1) at (0,0) {$(1,1,1,3,3,4,7)$};
\node[die] (d2) at (1,0) {$(2,2,2,2,2,2,6)$};
\node[die] (d3) at (2,0) {$(1,1,1,3,5,5,5)$};

% Row 2
\node[die] (d4) at (2,-1) {$(1,1,2,2,4,4,4)$};
\node[die] (d5) at (1,-1) {$(1,1,3,3,3,3,3)$};
\node[die] (d6) at (0,-1) {$(1,2,2,2,2,5,6)$};

% Row 3
\node[die] (d7) at (0,-2) {$(1,1,1,4,4,4,5)$};
\node[die] (d8) at (1,-2) {$(1,1,3,3,3,3,4)$};
\node[die] (d9) at (2,-2) {$(1,2,2,2,2,5,7)$};

% Row 4
\node[die] (d10) at (2,-3) {$(1,1,1,4,4,4,6)$};
\node[die] (d11) at (1,-3) {$(1,1,3,3,3,3,5)$};
\node[die] (d12) at (0,-3) {$(1,2,2,2,2,6,7)$};

% Row 5
\node[die] (d13) at (0,-4) {$(1,1,1,3,5,5,6)$};
\node[die] (d14) at (1,-4) {$(2,2,2,2,3,4,4)$};

% Directed cycle
\draw[edge] (d1) -- (d2);
\draw[edge] (d2) -- (d3);
\draw[edge] (d3) -- (d4);
\draw[edge] (d4) -- (d5);
\draw[edge] (d5) -- (d6);
\draw[edge] (d6) -- (d7);
\draw[edge] (d7) -- (d8);
\draw[edge] (d8) -- (d9);
\draw[edge] (d9) -- (d10);
\draw[edge] (d10) -- (d11);
\draw[edge] (d11) -- (d12);
\draw[edge] (d12) -- (d13);
\draw[edge] (d13) -- (d14);

% Closing edge
\draw[edge]
    (d14.south)
    -- ++(0,-0.6)
    -| ([xshift=-0.75cm]d1.west)
    -- (d1.west);

\end{tikzpicture}
\end{center}

\noindent\textbf{Case for \(n=8\)}\newline
For \(n=8\), the number of vertices in \(G_8\) is \(6,435\) and number of edges are \(956,307\). Additionally, there was only one non-trivial SCC with \(4,321\) vertices and \(674,108\) edges (cf. Appendix H).  Using our program, we obtain the following directed cycle of length \(29\).\newline

\begin{center}
\begin{tikzpicture}[
    >=Stealth,
    die/.style={
        draw,
        rounded corners,
        font=\scriptsize,
        inner sep=3pt,
        minimum width=4.1cm
    },
    edge/.style={->, thick},
    x=4.5cm,
    y=1.5cm
]

% Row 1
\node[die] (d1) at (0,0) {$(1,1,1,2,4,6,6,6)$};
\node[die] (d2) at (1,0) {$(1,1,3,3,3,3,3,5)$};
\node[die] (d3) at (2,0) {$(2,2,2,2,2,4,4,4)$};

% Row 2
\node[die] (d4) at (2,-1) {$(1,1,1,3,3,3,7,8)$};
\node[die] (d5) at (1,-1) {$(2,2,2,2,2,2,6,7)$};
\node[die] (d6) at (0,-1) {$(1,1,1,2,5,5,5,6)$};

% Row 3
\node[die] (d7) at (0,-2) {$(1,2,3,3,3,4,4,4)$};
\node[die] (d8) at (1,-2) {$(2,2,2,2,2,3,6,6)$};
\node[die] (d9) at (2,-2) {$(1,1,1,2,5,5,5,7)$};

% Row 4
\node[die] (d10) at (2,-3) {$(1,2,3,3,4,4,4,4)$};
\node[die] (d11) at (1,-3) {$(1,2,2,2,3,3,6,6)$};
\node[die] (d12) at (0,-3) {$(1,1,1,2,5,5,5,5)$};

% Row 5
\node[die] (d13) at (0,-4) {$(1,2,3,4,4,4,4,4)$};
\node[die] (d14) at (1,-4) {$(1,2,3,3,3,3,4,6)$};
\node[die] (d15) at (2,-4) {$(2,2,2,2,2,5,5,5)$};

% Row 6
\node[die] (d16) at (2,-5) {$(1,1,1,3,4,4,8,8)$};
\node[die] (d17) at (1,-5) {$(2,2,2,2,2,3,6,7)$};
\node[die] (d18) at (0,-5) {$(1,1,1,2,5,5,5,8)$};

% Row 7
\node[die] (d19) at (0,-6) {$(1,2,4,4,4,4,4,4)$};
\node[die] (d20) at (1,-6) {$(1,1,1,3,3,6,7,7)$};
\node[die] (d21) at (2,-6) {$(2,2,2,2,2,5,5,6)$};

% Row 8
\node[die] (d22) at (2,-7) {$(1,1,1,4,4,4,6,7)$};
\node[die] (d23) at (1,-7) {$(1,1,3,3,3,3,5,6)$};
\node[die] (d24) at (0,-7) {$(2,2,2,2,2,4,5,8)$};

% Row 9
\node[die] (d25) at (0,-8) {$(1,1,1,3,3,7,7,7)$};
\node[die] (d26) at (1,-8) {$(2,2,2,2,2,2,6,8)$};
\node[die] (d27) at (2,-8) {$(1,1,1,3,5,5,5,6)$};

% Row 10
\node[die] (d28) at (2,-9) {$(1,3,3,3,3,4,4,4)$};
\node[die] (d29) at (1,-9) {$(2,2,2,3,3,3,3,7)$};

% Directed cycle
\draw[edge] (d1) -- (d2);
\draw[edge] (d2) -- (d3);
\draw[edge] (d3) -- (d4);
\draw[edge] (d4) -- (d5);
\draw[edge] (d5) -- (d6);
\draw[edge] (d6) -- (d7);
\draw[edge] (d7) -- (d8);
\draw[edge] (d8) -- (d9);
\draw[edge] (d9) -- (d10);
\draw[edge] (d10) -- (d11);
\draw[edge] (d11) -- (d12);
\draw[edge] (d12) -- (d13);
\draw[edge] (d13) -- (d14);
\draw[edge] (d14) -- (d15);
\draw[edge] (d15) -- (d16);
\draw[edge] (d16) -- (d17);
\draw[edge] (d17) -- (d18);
\draw[edge] (d18) -- (d19);
\draw[edge] (d19) -- (d20);
\draw[edge] (d20) -- (d21);
\draw[edge] (d21) -- (d22);
\draw[edge] (d22) -- (d23);
\draw[edge] (d23) -- (d24);
\draw[edge] (d24) -- (d25);
\draw[edge] (d25) -- (d26);
\draw[edge] (d26) -- (d27);
\draw[edge] (d27) -- (d28);
\draw[edge] (d28) -- (d29);

% Closing edge
\draw[edge]
    (d29.west)
    -- ++(-1.15,0)
    |- (d1.west);

\end{tikzpicture}
\end{center}

\noindent\textbf{Case for \(n=9\)}\newline
The graph \(G_9\) has \(24,310\) vertices and \(12,528,131\) directed edges. It possesses \(3,860\) SCCs of which one has \(20,451\) vertices and \(10,933,011\) edges (cf Appendix I). The remaining SCCs are singletons. Here is a non-transitive cycle of length \(10\)

\begin{center}
\begin{tikzpicture}[
    >=Stealth,
    die/.style={
        draw,
        rounded corners,
        font=\scriptsize,
        inner sep=3pt,
        minimum width=4.1cm
    },
    edge/.style={->, thick},
    x=4.5cm,
    y=1.5cm
]

% Row 1
\node[die] (d1) at (0,0) {$(1,1,1,1,3,3,3,3,7)$};
\node[die] (d2) at (1,0) {$(2,2,2,2,2,2,2,2,6)$};
\node[die] (d3) at (2,0) {$(1,1,1,1,3,5,5,5,5)$};

% Row 2
\node[die] (d4) at (2,-1) {$(1,1,2,2,2,4,4,4,4)$};
\node[die] (d5) at (1,-1) {$(1,1,1,2,3,3,3,3,8)$};
\node[die] (d6) at (0,-1) {$(2,2,2,2,2,2,2,2,7)$};

% Row 3
\node[die] (d7) at (0,-2) {$(1,1,1,1,3,3,6,6,6)$};
\node[die] (d8) at (1,-2) {$(2,2,2,2,2,2,2,4,5)$};
\node[die] (d9) at (2,-2) {$(1,1,1,1,3,3,4,5,9)$};

% Row 4
\node[die] (d10) at (2,-3) {$(2,2,2,2,2,2,2,2,8)$};

% Directed cycle
\draw[edge] (d1) -- (d2);
\draw[edge] (d2) -- (d3);
\draw[edge] (d3) -- (d4);
\draw[edge] (d4) -- (d5);
\draw[edge] (d5) -- (d6);
\draw[edge] (d6) -- (d7);
\draw[edge] (d7) -- (d8);
\draw[edge] (d8) -- (d9);
\draw[edge] (d9) -- (d10);

% Closing edge
\draw[edge]
    (d10.east)
    -- ++(0.75cm,0)
    |- ([yshift=0.7cm]d1.north)
    -- (d1.north);

\end{tikzpicture}
\end{center}

\noindent\textbf{Case for \(n=10\)}\newline
\(G_{10}\) is such that \(\abs{V} = 92,378 \) and \(\abs{E} = 157,244,770\). There was only one non-trivial SCC  with \(81,791\) vertices and \(147,620,052\) edges (cf. Appendix J). Here is a directed cycle of length \(110\) found in that SCC. 

\begin{center}
\resizebox{\textwidth}{!}{%
\begin{tikzpicture}[
    >=Stealth,
    die/.style={
        draw,
        rounded corners,
        font=\tiny,
        inner sep=2pt,
        minimum width=3.7cm
    },
    edge/.style={->,thick},
    x=4.0cm,
    y=1.0cm
]

\foreach \lab [count=\i] in {
{1,1,1,1,2,4,4,4,4,4},
{1,2,2,3,3,3,3,3,3,3},
{1,2,2,2,2,2,2,4,7,7},
{1,1,1,1,3,3,3,6,6,6},
{1,2,2,2,2,2,2,5,5,5},
{1,1,1,1,2,4,4,4,4,5},
{1,2,3,3,3,3,3,3,3,3},
{1,1,1,2,2,2,6,7,7,7},
{1,1,1,1,2,4,5,5,5,6},
{1,1,1,3,3,3,3,3,4,4},
{2,2,2,2,2,2,2,3,6,6},
{1,1,1,1,2,5,5,5,5,6},
{1,2,2,2,2,2,4,4,4,4},
{1,1,1,3,3,3,3,3,3,5},
{2,2,2,2,2,2,2,4,4,7},
{1,1,1,1,3,3,3,6,6,7},
{2,2,2,2,2,2,2,4,5,5},
{1,1,1,1,3,3,3,4,7,8},
{2,2,2,2,2,2,2,2,6,7},
{1,1,1,1,3,4,4,5,5,6},
{1,2,2,3,3,3,3,3,3,4},
{1,2,2,2,2,2,2,4,8,8},
{1,1,1,1,3,3,3,6,7,7},
{2,2,2,2,2,2,2,5,5,5},
{1,1,1,2,2,4,4,4,4,4},
{1,1,1,3,3,3,3,3,3,6},
{2,2,2,2,2,2,2,4,4,8},
{1,1,1,1,3,3,3,4,7,9},
{2,2,2,2,2,2,2,2,6,8},
{1,1,1,1,2,5,5,5,5,7},
{1,2,2,2,2,3,4,4,4,4},
{1,1,1,3,3,3,3,3,3,7},
{1,2,2,2,2,2,2,5,6,7},
{1,1,1,1,3,4,4,5,5,5},
{2,2,2,3,3,3,3,3,3,4},
{1,1,1,2,2,2,6,7,7,8},
{1,1,1,1,3,4,5,5,5,6},
{1,1,2,3,3,3,3,3,4,4},
{2,2,2,2,2,2,2,3,6,7},
{1,1,1,1,2,5,5,5,5,8},
{1,2,2,2,2,4,4,4,4,4},
{1,1,1,1,3,3,3,4,7,10},
{2,2,2,2,2,2,2,2,6,9},
{1,1,1,1,3,4,5,5,5,7},
{1,2,2,3,3,3,3,3,4,4},
{1,2,2,2,2,2,2,4,8,9},
{1,1,1,1,3,3,3,7,7,7},
{1,2,2,2,2,2,2,5,6,6},
{1,1,1,1,4,4,4,5,5,5},
{1,1,1,3,3,3,3,3,4,5},
{2,2,2,2,2,2,2,3,6,8},
{1,1,1,1,2,5,5,5,5,9},
{1,2,2,2,3,3,4,4,4,4},
{1,1,1,3,3,3,3,3,3,8},
{1,2,2,2,2,2,2,5,6,8},
{1,1,1,1,4,4,4,4,5,6},
{1,1,2,3,3,3,3,3,4,5},
{1,2,2,2,2,2,2,4,9,9},
{1,1,1,1,3,3,3,7,7,8},
{1,2,2,2,2,2,2,6,6,6},
{1,1,1,1,2,5,5,5,5,10},
{1,2,2,2,3,4,4,4,4,4},
{1,1,1,1,3,3,3,4,8,9},
{2,2,2,2,2,2,2,2,7,8},
{1,1,1,1,2,4,6,6,6,7},
{1,1,1,3,3,3,3,3,5,5},
{1,2,2,2,2,2,2,4,9,10},
{1,1,1,1,3,3,3,7,8,8},
{2,2,2,2,2,2,2,2,6,10},
{1,1,1,1,3,4,5,5,5,8},
{1,2,3,3,3,3,3,3,3,4},
{1,1,1,2,2,2,6,7,7,9},
{1,1,1,1,3,5,5,5,5,6},
{1,1,1,2,2,4,4,4,4,5},
{1,1,1,3,3,3,3,3,3,9},
{1,2,2,2,2,2,2,5,6,9},
{1,1,1,1,4,4,4,4,5,7},
{1,1,1,3,3,3,3,3,4,6},
{2,2,2,2,2,2,2,3,6,9},
{1,1,1,1,2,5,5,5,6,6},
{1,2,2,2,4,4,4,4,4,4},
{1,1,1,2,3,3,3,4,8,10},
{2,2,2,2,2,2,2,2,7,9},
{1,1,1,1,2,4,6,6,6,8},
{1,1,2,3,3,3,3,3,3,5},
{1,2,2,2,2,2,2,4,10,10},
{1,1,1,1,3,3,3,7,7,9},
{2,2,2,2,2,2,2,4,5,6},
{1,1,1,1,3,3,3,4,9,10},
{2,2,2,2,2,2,2,2,8,9},
{1,1,1,1,2,4,6,6,7,8},
{1,1,3,3,3,3,3,3,3,5},
{1,2,2,2,2,2,4,4,4,5},
{1,1,1,3,3,3,3,3,3,10},
{1,2,2,2,2,2,2,5,6,10},
{1,1,1,1,4,4,4,4,5,8},
{1,1,1,3,3,3,3,3,4,7},
{1,2,2,2,2,2,2,6,6,7},
{1,1,1,1,3,3,5,5,5,10},
{2,2,2,2,2,2,2,4,4,9},
{1,1,1,1,3,3,3,7,7,10},
{2,2,2,2,2,2,2,4,6,6},
{1,1,1,1,3,4,5,5,5,9},
{2,2,2,3,3,3,3,3,4,4},
{1,1,1,2,2,2,6,7,7,10},
{1,1,1,1,4,4,4,5,5,6},
{1,1,1,3,3,3,3,3,4,8},
{1,2,2,2,2,2,2,5,7,8},
{1,1,1,1,3,4,4,4,6,7},
{2,2,2,2,3,3,3,3,3,5}
}{
    \pgfmathtruncatemacro{\row}{floor((\i-1)/6)}
    \pgfmathtruncatemacro{\rawcol}{mod(\i-1,6)}

    \ifodd\row
        \pgfmathtruncatemacro{\col}{5-\rawcol}
    \else
        \pgfmathtruncatemacro{\col}{\rawcol}
    \fi

    \node[die] (d\i) at (\col,-\row) {$\bigl(\lab\bigr)$};

    \xdef\N{\i}
}

% Consecutive directed edges
\pgfmathtruncatemacro{\NmOne}{\N-1}

\foreach \i in {1,...,\NmOne}{
    \pgfmathtruncatemacro{\j}{\i+1}
    \draw[edge] (d\i) -- (d\j);
}

% Closing directed edge
\draw[edge]
    (d\N.south)
    -- ++(0,-0.6)
    -| ([xshift=-0.5cm]d1.west)
    -- (d1.west);

\end{tikzpicture}%
}
\end{center}

Continuing onto cases \(n=11\) and \(n=12\) was considerably more time consuming. The graph \(G_{11}\) has \(352,716\) vertices and the graph \(G_{12}\) has \(1,352,078\).  Since the search space is growing relatively quickly, we surmise both these graphs contain non-transitive cycles in some SCC of length at least \(12\) and \(13\), respectively. 

This concludes the evidence presented for the conjecture at this time (cf. Appendices F,G,H,I,J).

\end{evidence}

\section{Conclusion}
We have given a computer-assisted proof that \(n=6\) is the smallest value for which the restricted class of non-standard \(n\)-sided dice admits a non-transitive directed cycle of maximum length \(n\) with uniform and minimal winning advantage. The proof identifies the two unique directed \(6\)-cycles which satisfy the conditions of Theorem~\ref{thm1}.

For \(n=7,8,9\) and \(10\), the corresponding directed graphs contain cycles of length strictly greater than \(n\), providing computational evidence for Conjecture~\ref{con1}. The case for \(n=11\), \(12\) and beyond becomes computationally more difficult, and determining whether the conjecture holds in general remains open.

This work was inspired by many games of dice the author played with his mom which led to a fascination with (non-transitive) dice and interesting examples in probability and combinatorics.

\section*{Technology Disclosure}
The author utilized Google in the search for literature on non-transitive dice. The author utilized GPT-5.6 for aid in writing the Julia scripts as a means to save time and quick prototyping. The author prompted the GPT-5.6 LLM with explicit instructions for the writing of the scripts in Julia. Prompts included the statement ``no advice, no tangents, no unnecessary commentary'' to ensure the LLM completed only the task without deviation from the author-originated ideas. For evidence related to Conjecture~\ref{con1}, the author permitted the LLM to aid in speeding up the computer-assisted portion due to the long execution time for \(n = 10\), as a result the LLM augmented the script with the condition on the RHS of (\ref{favour3}). The same LLM was utilized for creating TikZ diagrams as a means to avoid the frustration of dealing with TikZ. The mathematical ideas and their development are entirely the author's own.  The author assumes full responsibility for the mathematical content, correctness of the arguments, interpretation of the computational results, and any errors that may remain. 

\appendix
\section{Code for Proof of Theorem~\ref{thm1}}
Be sure to edit \texttt{target} depending on the parity of \(n\).
\begin{verbatim}
    using Graphs

# ============================================================
# 1. Parameter
# ============================================================

n = 6

@assert n > 0  "n must be a positive integer"

target = n^2 ÷ 2 + 1 # (n^2+1) ÷ 2 if n is odd
outfile = "dice_cycles_n$(n).txt"

# ============================================================
# 2. Enumerate all nondecreasing n-tuples lexicographically
# ============================================================

function generate_dice(n)
    dice = Tuple[]
    x = zeros(Int, n)

    function build(pos, lower)
        if pos > n
            push!(dice, Tuple(x))
            return
        end

        for value in lower:n
            x[pos] = value
            build(pos + 1, value)
        end
    end

    build(1, 1)
    return dice
end

dice = generate_dice(n)
N = length(dice)

# Number of multisets of size n chosen from {1,...,n}
expected_vertices = binomial(2n - 1, n)

# ============================================================
# 3. Construct directed graph
#
#    X -> Y iff
#
#        sum_{i=1}^n sum_{j=1}^n 1{x_i > y_j}
#            = n^2/2 + 1
# ============================================================

beatscore(X, Y) = sum(x > y for x in X for y in Y)

g = SimpleDiGraph(N)
qualifying_edges = 0

for i in 1:N
    for j in 1:N
        i == j && continue

        if beatscore(dice[i], dice[j]) == target
            add_edge!(g, i, j)
            qualifying_edges += 1
        end
    end
end

# Verify every graph edge satisfies exactly the required condition.
edge_rule_ok =
    ne(g) == qualifying_edges &&
    all(beatscore(dice[src(e)], dice[dst(e)]) == target for e in edges(g))

# ============================================================
# 4. Tarjan strongly connected components
# ============================================================

components = strongly_connected_components_tarjan(g)

# Sort largest first; break ties by smallest vertex number.
sort!(components, by = c -> (-length(c), minimum(c)))

num_sccs = length(components)

# Number of internal edges in every SCC.
scc_edges = Int[]

for C in components
    sg, _ = induced_subgraph(g, C)
    push!(scc_edges, ne(sg))
end

# ============================================================
# 5. Verify that the SCCs partition the entire vertex set
# ============================================================

all_scc_vertices = vcat(components...)

partition_ok =
    length(all_scc_vertices) == N &&
    length(unique(all_scc_vertices)) == N &&
    sort(all_scc_vertices) == collect(1:N)

# ============================================================
# 6. SCCs having at least n vertices
# ============================================================

selected_scc_indices = findall(
    C -> length(C) >= n,
    components
)

# ============================================================
# 7. Exhaustively enumerate all directed simple cycles
#    in every SCC having at least n vertices
# ============================================================

function cycles_in_scc(g, component)
    component = sort(component)

    sg, vertex_map = induced_subgraph(g, component)

    local_cycles = simplecycles(sg)

    # Convert subgraph vertex numbers back to original graph vertices.
    return [
        [vertex_map[v] for v in cycle]
        for cycle in local_cycles
    ]
end

selected_scc_cycles = [
    cycles_in_scc(g, components[k])
    for k in selected_scc_indices
]

cycle_containment_ok = all(
    all(v in Set(components[k]) for v in cycle)
    for (r, k) in enumerate(selected_scc_indices)
    for cycle in selected_scc_cycles[r]
)

# ============================================================
# 8. Write complete report
# ============================================================

open(outfile, "w") do io

    println(io, "DIRECTED DICE GRAPH — EXHAUSTIVE SCC/CYCLE REPORT")
    println(io, "="^65)

    println(io, "\nn = ", n)
    println(io, "Required double-sum value = n^2/2 + 1 = ", target)
    println(io, "Edge condition:")
    println(io,
        "X -> Y iff sum_{i=1}^n sum_{j=1}^n 1{x_i > y_j} = ",
        target
    )

    # --------------------------------------------------------
    # Vertices
    # --------------------------------------------------------

    println(io, "\n", "="^65)
    println(io, "1. VERTICES")
    println(io, "="^65)

    println(io, "Total number of dice/vertices: ", N)
    println(io, "Expected binomial(2n-1,n):     ", expected_vertices)
    println(io,
        "Vertex-count verification:      ",
        N == expected_vertices ? "PASS" : "FAIL"
    )

    # --------------------------------------------------------
    # Edges
    # --------------------------------------------------------

    println(io, "\n", "="^65)
    println(io, "2. EDGES")
    println(io, "="^65)

    println(io, "Total number of directed edges: ", ne(g))
    println(io, "Required double-sum value:       ", target)
    println(io,
        "Edge-condition verification:     ",
        edge_rule_ok ? "PASS" : "FAIL"
    )

    # --------------------------------------------------------
    # SCCs
    # --------------------------------------------------------

    println(io, "\n", "="^65)
    println(io, "3. STRONGLY CONNECTED COMPONENTS")
    println(io, "="^65)

    println(io, "Number of SCCs: ", num_sccs)

    println(io, "\nSCC sizes:")
    for k in eachindex(components)
        println(
            io,
            "SCC ", k,
            ": vertices = ", length(components[k]),
            ", internal edges = ", scc_edges[k]
        )
    end

    # --------------------------------------------------------
    # Partition verification
    # --------------------------------------------------------

    println(io, "\n", "="^65)
    println(io, "4. SCC PARTITION VERIFICATION")
    println(io, "="^65)

    println(io,
        "SCCs partition the full vertex set: ",
        partition_ok ? "PASS" : "FAIL"
    )

    println(io,
        "Hence every directed cycle is entirely contained in one SCC."
    )

    println(io,
        "Cycle-containment check for enumerated cycles: ",
        cycle_containment_ok ? "PASS" : "FAIL"
    )

    # --------------------------------------------------------
    # SCCs with at least n vertices
    # --------------------------------------------------------

    println(io, "\n", "="^65)
    println(io, "5. SCCs WITH AT LEAST n VERTICES")
    println(io, "="^65)

    println(io, "Threshold n: ", n)
    println(io,
        "Number of SCCs having at least n vertices: ",
        length(selected_scc_indices)
    )

    for (r, k) in enumerate(selected_scc_indices)
        C = sort(components[k])

        println(io, "\nSelected SCC ", r)
        println(io, "Original SCC number: ", k)
        println(io, "Number of vertices: ", length(C))
        println(io, "Number of internal edges: ", scc_edges[k])

        println(io, "\nDice/tuples in this SCC:")

        for v in C
            println(io, "  vertex ", v, ": ", dice[v])
        end
    end

    # --------------------------------------------------------
    # Complete cycle lists
    # --------------------------------------------------------

    println(io, "\n", "="^65)
    println(io, "6. ALL DIRECTED SIMPLE CYCLES IN SCCs WITH AT LEAST n VERTICES")
    println(io, "="^65)

    for (r, k) in enumerate(selected_scc_indices)
        cycles = selected_scc_cycles[r]

        println(io, "\nSelected SCC ", r)
        println(io, "Original SCC number: ", k)
        println(io, "Number of vertices: ", length(components[k]))
        println(io, "Number of directed simple cycles: ", length(cycles))

        if isempty(cycles)
            println(io, "No directed cycles.")
            continue
        end

        for (j, cycle) in enumerate(cycles)
            println(io, "\nCycle ", j, " (length ", length(cycle), "):")

            println(
                io,
                "  vertices: ",
                join(cycle, " -> "),
                " -> ",
                first(cycle)
            )

            println(
                io,
                "  dice:     ",
                join((string(dice[v]) for v in cycle), " -> "),
                " -> ",
                dice[first(cycle)]
            )
        end
    end
end

println("Results written to: ", outfile)
\end{verbatim}

\section{Julia Output for \(n=3\)}
\begin{verbatim}
    DIRECTED DICE GRAPH — EXHAUSTIVE SCC/CYCLE REPORT
=================================================================

n = 3
Required double-sum value = (n^2+1)/2 = 5
Edge condition:
X -> Y iff sum_{i=1}^n sum_{j=1}^n 1{x_i > y_j} = 5

=================================================================
1. VERTICES
=================================================================
Total number of dice/vertices: 10
Expected binomial(2n-1,n):     10
Vertex-count verification:      PASS

=================================================================
2. EDGES
=================================================================
Total number of directed edges: 3
Required double-sum value:       5
Edge-condition verification:     PASS

=================================================================
3. STRONGLY CONNECTED COMPONENTS
=================================================================
Number of SCCs: 10

SCC sizes:
SCC 1: vertices = 1, internal edges = 0
SCC 2: vertices = 1, internal edges = 0
SCC 3: vertices = 1, internal edges = 0
SCC 4: vertices = 1, internal edges = 0
SCC 5: vertices = 1, internal edges = 0
SCC 6: vertices = 1, internal edges = 0
SCC 7: vertices = 1, internal edges = 0
SCC 8: vertices = 1, internal edges = 0
SCC 9: vertices = 1, internal edges = 0
SCC 10: vertices = 1, internal edges = 0

=================================================================
4. SCC PARTITION VERIFICATION
=================================================================
SCCs partition the full vertex set: PASS
Hence every directed cycle is entirely contained in one SCC.
Cycle-containment check for enumerated cycles: PASS

=================================================================
5. SCCs WITH AT LEAST n VERTICES
=================================================================
Threshold n: 3
Number of SCCs having at least n vertices: 0

=================================================================
6. ALL DIRECTED SIMPLE CYCLES IN SCCs WITH AT LEAST n VERTICES
=================================================================

\end{verbatim}

\section{Julia Output for \(n=4\)}
\begin{verbatim}
    DIRECTED DICE GRAPH — EXHAUSTIVE SCC/CYCLE REPORT
=================================================================

n = 4
Required double-sum value = n^2/2 + 1 = 9
Edge condition:
X -> Y iff sum_{i=1}^n sum_{j=1}^n 1{x_i > y_j} = 9

=================================================================
1. VERTICES
=================================================================
Total number of dice/vertices: 35
Expected binomial(2n-1,n):     35
Vertex-count verification:      PASS

=================================================================
2. EDGES
=================================================================
Total number of directed edges: 67
Required double-sum value:       9
Edge-condition verification:     PASS

=================================================================
3. STRONGLY CONNECTED COMPONENTS
=================================================================
Number of SCCs: 35

SCC sizes:
35 trivial SCCs (one vertex, zero internal edges each).

=================================================================
4. SCC PARTITION VERIFICATION
=================================================================
SCCs partition the full vertex set: PASS
Hence every directed cycle is entirely contained in one SCC.
Cycle-containment check for enumerated cycles: PASS

=================================================================
5. SCCs WITH AT LEAST n VERTICES
=================================================================
Threshold n: 4
Number of SCCs having at least n vertices: 0

=================================================================
6. ALL DIRECTED SIMPLE CYCLES IN SCCs WITH AT LEAST n VERTICES
=================================================================


\end{verbatim}

\section{Julia Output for \(n=5\)}
\begin{verbatim}
    DIRECTED DICE GRAPH — EXHAUSTIVE SCC/CYCLE REPORT
=================================================================

n = 5
Required double-sum value = (n^2+1)/2 = 13
Edge condition:
X -> Y iff sum_{i=1}^n sum_{j=1}^n 1{x_i > y_j} = 13

=================================================================
1. VERTICES
=================================================================
Total number of dice/vertices: 126
Expected binomial(2n-1,n):     126
Vertex-count verification:      PASS

=================================================================
2. EDGES
=================================================================
Total number of directed edges: 620
Required double-sum value:       13
Edge-condition verification:     PASS

=================================================================
3. STRONGLY CONNECTED COMPONENTS
=================================================================
Number of SCCs: 126

SCC sizes:  (All 126 SCCs are singletons)

=================================================================
4. SCC PARTITION VERIFICATION
=================================================================
SCCs partition the full vertex set: PASS
Hence every directed cycle is entirely contained in one SCC.
Cycle-containment check for enumerated cycles: PASS

=================================================================
5. SCCs WITH AT LEAST n VERTICES
=================================================================
Threshold n: 5
Number of SCCs having at least n vertices: 0

=================================================================
6. ALL DIRECTED SIMPLE CYCLES IN SCCs WITH AT LEAST n VERTICES
=================================================================

\end{verbatim}

\section{Julia Output for \(n=6\)}
\begin{verbatim}
    DIRECTED DICE GRAPH — EXHAUSTIVE SCC/CYCLE REPORT
=================================================================

n = 6
Required double-sum value = n^2/2 + 1 = 19
Edge condition:
X -> Y iff sum_{i=1}^n sum_{j=1}^n 1{x_i > y_j} = 19

=================================================================
1. VERTICES
=================================================================
Total number of dice/vertices: 462
Expected binomial(2n-1,n):     462
Vertex-count verification:      PASS

=================================================================
2. EDGES
=================================================================
Total number of directed edges: 5491
Required double-sum value:       19
Edge-condition verification:     PASS

=================================================================
3. STRONGLY CONNECTED COMPONENTS
=================================================================
Number of SCCs: 450

SCC sizes:
SCC 1: vertices = 7, internal edges = 10
SCC 2: vertices = 7, internal edges = 10
448 trivial SCCs (one vertex, zero internal edges each).

=================================================================
4. SCC PARTITION VERIFICATION
=================================================================
SCCs partition the full vertex set: PASS
Hence every directed cycle is entirely contained in one SCC.
Cycle-containment check for enumerated cycles: PASS

=================================================================
5. SCCs WITH AT LEAST n VERTICES
=================================================================
Threshold n: 6
Number of SCCs having at least n vertices: 2

Selected SCC 1
Original SCC number: 1
Number of vertices: 7
Number of internal edges: 10

Dice/tuples in this SCC:
  vertex 88: (1, 1, 2, 5, 5, 5)
  vertex 89: (1, 1, 2, 5, 5, 6)
  vertex 151: (1, 2, 2, 3, 6, 6)
  vertex 207: (1, 3, 3, 4, 4, 4)
  vertex 217: (1, 3, 4, 4, 4, 4)
  vertex 277: (2, 2, 2, 3, 6, 6)
  vertex 291: (2, 2, 3, 3, 3, 6)

Selected SCC 2
Original SCC number: 2
Number of vertices: 7
Number of internal edges: 10

Dice/tuples in this SCC:
  vertex 118: (1, 1, 4, 5, 5, 5)
  vertex 119: (1, 1, 4, 5, 5, 6)
  vertex 160: (1, 2, 2, 5, 6, 6)
  vertex 235: (1, 4, 4, 4, 5, 5)
  vertex 286: (2, 2, 2, 5, 6, 6)
  vertex 385: (3, 3, 3, 3, 4, 6)
  vertex 391: (3, 3, 3, 4, 4, 6)

=================================================================
6. ALL DIRECTED SIMPLE CYCLES IN SCCs WITH AT LEAST n VERTICES
=================================================================

Selected SCC 1
Original SCC number: 1
Number of vertices: 7
Number of directed simple cycles: 4

Cycle 1 (length 3):
  vertices: 88 -> 207 -> 291 -> 88
  dice:     (1, 1, 2, 5, 5, 5) -> (1, 3, 3, 4, 4, 4) 
  -> (2, 2, 3, 3, 3, 6) -> (1, 1, 2, 5, 5, 5)

Cycle 2 (length 3):
  vertices: 88 -> 217 -> 151 -> 88
  dice:     (1, 1, 2, 5, 5, 5) -> (1, 3, 4, 4, 4, 4) 
  -> (1, 2, 2, 3, 6, 6) -> (1, 1, 2, 5, 5, 5)

Cycle 3 (length 6):
  vertices: 88 -> 217 -> 277 -> 89 -> 207 -> 291 -> 88
  dice:     (1, 1, 2, 5, 5, 5) -> (1, 3, 4, 4, 4, 4) 
  -> (2, 2, 2, 3, 6, 6) -> (1, 1, 2, 5, 5, 6) 
  -> (1, 3, 3, 4, 4, 4) -> (2, 2, 3, 3, 3, 6) -> (1, 1, 2, 5, 5, 5)

Cycle 4 (length 3):
  vertices: 217 -> 277 -> 89 -> 217
  dice:     (1, 3, 4, 4, 4, 4) -> (2, 2, 2, 3, 6, 6) 
  -> (1, 1, 2, 5, 5, 6) -> (1, 3, 4, 4, 4, 4)

Selected SCC 2
Original SCC number: 2
Number of vertices: 7
Number of directed simple cycles: 4

Cycle 1 (length 3):
  vertices: 118 -> 385 -> 160 -> 118
  dice:     (1, 1, 4, 5, 5, 5) -> (3, 3, 3, 3, 4, 6) 
  -> (1, 2, 2, 5, 6, 6) -> (1, 1, 4, 5, 5, 5)

Cycle 2 (length 6):
  vertices: 118 -> 385 -> 286 -> 235 -> 391 -> 160 -> 118
  dice:     (1, 1, 4, 5, 5, 5) -> (3, 3, 3, 3, 4, 6) 
  -> (2, 2, 2, 5, 6, 6) -> (1, 4, 4, 4, 5, 5) 
  -> (3, 3, 3, 4, 4, 6) -> (1, 2, 2, 5, 6, 6) -> (1, 1, 4, 5, 5, 5)

Cycle 3 (length 3):
  vertices: 385 -> 286 -> 119 -> 385
  dice:     (3, 3, 3, 3, 4, 6) -> (2, 2, 2, 5, 6, 6) 
  -> (1, 1, 4, 5, 5, 6) -> (3, 3, 3, 3, 4, 6)

Cycle 4 (length 3):
  vertices: 286 -> 235 -> 391 -> 286
  dice:     (2, 2, 2, 5, 6, 6) -> (1, 4, 4, 4, 5, 5) 
  -> (3, 3, 3, 4, 4, 6) -> (2, 2, 2, 5, 6, 6)

\end{verbatim}
\section{Code for evidence of Conjecture~\ref{con1}}
Be sure to edit \texttt{target} depending on the parity of \(n\).
\begin{verbatim}
    using Graphs

# ============================================================
# Exact directed-dice graph and long-cycle search
# ============================================================

function main(n)

    @assert n > 0  "n must be a positive integer"

    target = n^2 ÷ 2 + 1 # (n^2 + 1) ÷ 2 if n is odd
    min_cycle_length = n + 1
    outfile = "dice_cycle_search_n$(n).txt"

    # ========================================================
    # 1. Enumerate all nondecreasing n-tuples
    #    in lexicographic order
    # ========================================================

    function generate_dice(n)
        dice = Tuple[]
        x = zeros(Int, n)

        function build(pos, lower)
            if pos > n
                push!(dice, Tuple(x))
                return
            end

            for value in lower:n
                x[pos] = value
                build(pos + 1, value)
            end
        end

        build(1, 1)
        return dice
    end

    dice = generate_dice(n)
    N = length(dice)

    expected_vertices = binomial(2n - 1, n)

    # Map every die to its lexicographic vertex label.
    index_of = Dict{Tuple,Int}()

    for (i, D) in enumerate(dice)
        index_of[D] = i
    end

    # ========================================================
    # 2. Original double-sum edge score
    # ========================================================

    function beatscore(X, Y)
        j = 1
        score = 0

        @inbounds for x in X
            while j <= n && Y[j] < x
                j += 1
            end
            score += j - 1
        end

        return score
    end

    # ========================================================
    # 3. Generate exactly the outgoing neighbors of X
    #
    # If b_s is the number of faces of Y equal to s, and
    #
    #     w_s = #{i : x_i > s},
    #
    # then
    #
    #     score(X,Y) = sum_s w_s b_s.
    #
    # Hence X -> Y exactly when
    #
    #     sum_s b_s = n,
    #     sum_s w_s b_s = target,
    #     b_s >= 0.
    #
    # The recursion below enumerates exactly these solutions.
    # ========================================================

    function exact_outneighbors(X)

        # w[s] = number of faces of X strictly greater than s
        w = zeros(Int, n)

        p = 1
        @inbounds for s in 1:n
            while p <= n && X[p] <= s
                p += 1
            end
            w[s] = n - p + 1
        end

        counts = zeros(Int, n)
        y = zeros(Int, n)
        neighbors = Int[]

        # Convert multiplicities to the corresponding sorted die.
        function emit_neighbor()
            k = 1

            @inbounds for s in 1:n
                for _ in 1:counts[s]
                    y[k] = s
                    k += 1
                end
            end

            Y = Tuple(y)
            push!(neighbors, index_of[Y])
        end

        function search_counts(s, faces_left, score_left)

            # Last face value is n, and w[n] = 0.
            if s == n
                if score_left == 0
                    counts[n] = faces_left
                    emit_neighbor()
                end
                return
            end

            ws = w[s]
            next_max = w[s + 1]

            for c in 0:faces_left

                remaining_score = score_left - c * ws
                remaining_faces = faces_left - c

                remaining_score < 0 && break

                # Remaining faces can contribute at most
                # remaining_faces * next_max.
                if remaining_score <= remaining_faces * next_max
                    counts[s] = c
                    search_counts(
                        s + 1,
                        remaining_faces,
                        remaining_score
                    )
                end
            end

            counts[s] = 0
        end

        if target <= n * w[1]
            search_counts(1, n, target)
        end

        return neighbors
    end

    # ========================================================
    # 4. Construct the exact directed graph
    # ========================================================

    g = SimpleDiGraph(N)

    generated_edge_count = 0

    for i in 1:N

        neighbors = exact_outneighbors(dice[i])

        for j in neighbors
            i == j && continue

            generated_edge_count += 1
            add_edge!(g, i, j)
        end
    end

    # Verify no duplicate generated edges were suppressed and
    # every generated edge satisfies the original double sum.
    edge_rule_ok =
        ne(g) == generated_edge_count &&
        all(
            beatscore(dice[src(e)], dice[dst(e)]) == target
            for e in edges(g)
        )

    # ========================================================
    # 5. Tarjan strongly connected components
    # ========================================================

    components = strongly_connected_components_tarjan(g)

    sort!(
        components,
        by = C -> (-length(C), minimum(C))
    )

    num_sccs = length(components)

    scc_of = zeros(Int, N)

    for (k, C) in enumerate(components)
        for v in C
            scc_of[v] = k
        end
    end

    # Number of internal edges in each SCC.
    scc_edges = zeros(Int, num_sccs)

    for e in edges(g)
        a = scc_of[src(e)]
        b = scc_of[dst(e)]

        if a == b
            scc_edges[a] += 1
        end
    end

    # ========================================================
    # 6. Verify that SCCs partition the vertex set
    # ========================================================

    all_scc_vertices = vcat(components...)

    partition_ok =
        length(all_scc_vertices) == N &&
        length(unique(all_scc_vertices)) == N &&
        sort(all_scc_vertices) == collect(1:N)

    # SCCs large enough possibly to contain a cycle of
    # length at least n+1.
    eligible_sccs = findall(
        C -> length(C) >= min_cycle_length,
        components
    )

    # ========================================================
    # 7. Exhaustive DFS for first simple directed cycle
    #    of length at least n+1
    #
    # Every cycle is contained in a single SCC.
    #
    # For each cycle, its smallest vertex is its canonical
    # starting point. The condition next > start prevents
    # rediscovery from another cyclic rotation.
    # ========================================================

    function find_long_cycle()

        for scc_index in eligible_sccs

            component = sort(components[scc_index])

            allowed = falses(N)
            allowed[component] .= true

            visited = falses(N)
            path = Int[]

            function dfs(start, current)

                for next in outneighbors(g, current)

                    allowed[next] || continue

                    if next == start

                        if length(path) >= min_cycle_length
                            return copy(path)
                        end

                    elseif !visited[next] && next > start

                        visited[next] = true
                        push!(path, next)

                        cycle = dfs(start, next)

                        if cycle !== nothing
                            return cycle
                        end

                        pop!(path)
                        visited[next] = false
                    end
                end

                return nothing
            end

            for start in component

                fill!(visited, false)
                empty!(path)

                visited[start] = true
                push!(path, start)

                cycle = dfs(start, start)

                if cycle !== nothing
                    return cycle, scc_index
                end
            end
        end

        return nothing, nothing
    end

    cycle, cycle_scc = find_long_cycle()

    # ========================================================
    # 8. Independent verification of returned cycle
    # ========================================================

    if cycle === nothing

        cycle_length_ok = false
        cycle_simple_ok = false
        cycle_edges_ok = false
        cycle_scores_ok = false
        cycle_scc_ok = false
        cycle_scores = Int[]

    else

        L = length(cycle)

        cycle_scores = [
            beatscore(
                dice[cycle[i]],
                dice[cycle[mod1(i + 1, L)]]
            )
            for i in 1:L
        ]

        cycle_length_ok = L >= min_cycle_length

        cycle_simple_ok =
            length(unique(cycle)) == L

        cycle_edges_ok = all(
            has_edge(
                g,
                cycle[i],
                cycle[mod1(i + 1, L)]
            )
            for i in 1:L
        )

        cycle_scores_ok =
            all(score == target for score in cycle_scores)

        cycle_scc_ok =
            all(scc_of[v] == cycle_scc for v in cycle)
    end

    # ========================================================
    # 9. Write rigorous report
    # ========================================================

    open(outfile, "w") do io

        println(io, "DIRECTED DICE GRAPH — EXACT LONG-CYCLE SEARCH")
        println(io, "="^72)
        #change Required edge score to (n^2+1)/2 when n is odd
        println(io, "\nn = ", n)
        println(
            io,
            "Required edge score n^2/2 + 1 = ",
            target
        )
        println(
            io,
            "Required cycle length >= n+1 = ",
            min_cycle_length
        )

        # ----------------------------------------------------
        # 1. Vertices
        # ----------------------------------------------------

        println(io, "\n", "="^72)
        println(io, "1. VERTICES")
        println(io, "="^72)

        println(
            io,
            "Total number of dice/vertices: ",
            N
        )

        println(
            io,
            "Expected binomial(2n-1,n):     ",
            expected_vertices
        )

        println(
            io,
            "Vertex-count verification:      ",
            N == expected_vertices ? "PASS" : "FAIL"
        )

        # ----------------------------------------------------
        # 2. Edges
        # ----------------------------------------------------

        println(io, "\n", "="^72)
        println(io, "2. EDGES")
        println(io, "="^72)

        println(
            io,
            "Total number of directed edges: ",
            ne(g)
        )

        println(
            io,
            "Required double-sum value:       ",
            target
        )

        println(
            io,
            "Exact neighbor generation uses:"
        )

        println(
            io,
            "  sum_s b_s = n"
        )

        println(
            io,
            "  sum_s w_s b_s = n^2/2 + 1"
        )

        println(
            io,
            "  b_s >= 0"
        )

        println(
            io,
            "where w_s = #{i : x_i > s}."
        )

        println(
            io,
            "\nGenerated-edge count agrees with graph edge count: ",
            ne(g) == generated_edge_count ? "PASS" : "FAIL"
        )

        println(
            io,
            "Every directed edge satisfies the original double-sum condition: ",
            edge_rule_ok ? "PASS" : "FAIL"
        )

        # ----------------------------------------------------
        # 3. SCCs
        # ----------------------------------------------------

        println(io, "\n", "="^72)
        println(io, "3. STRONGLY CONNECTED COMPONENTS")
        println(io, "="^72)

        println(
            io,
            "Total number of SCCs found by Tarjan's algorithm: ",
            num_sccs
        )

        println(io, "\nSCC sizes:")

        for k in eachindex(components)

            println(
                io,
                "SCC ", k,
                ": vertices = ", length(components[k]),
                ", internal edges = ", scc_edges[k]
            )
        end

        # ----------------------------------------------------
        # 4. Partition
        # ----------------------------------------------------

        println(io, "\n", "="^72)
        println(io, "4. SCC PARTITION VERIFICATION")
        println(io, "="^72)

        println(
            io,
            "SCCs partition the full vertex set: ",
            partition_ok ? "PASS" : "FAIL"
        )

        println(
            io,
            "Every directed cycle is therefore contained entirely within one SCC."
        )

        println(
            io,
            "Number of SCCs having at least n+1 vertices: ",
            length(eligible_sccs)
        )

        # ----------------------------------------------------
        # 5. Long-cycle search
        # ----------------------------------------------------

        println(io, "\n", "="^72)
        println(
            io,
            "5. SEARCH FOR A DIRECTED CYCLE OF LENGTH AT LEAST n+1"
        )
        println(io, "="^72)

        if isempty(eligible_sccs)

            println(
                io,
                "No SCC has at least ",
                min_cycle_length,
                " vertices."
            )

            println(
                io,
                "Therefore no simple directed cycle of length at least ",
                min_cycle_length,
                " exists."
            )

        elseif cycle === nothing

            println(
                io,
                "All eligible SCCs were exhaustively searched by DFS."
            )

            println(
                io,
                "No simple directed cycle of length at least ",
                min_cycle_length,
                " was found."
            )

        else

            println(io, "Cycle found: PASS")

            println(
                io,
                "SCC containing cycle: ",
                cycle_scc
            )

            println(
                io,
                "Size of containing SCC: ",
                length(components[cycle_scc])
            )

            println(
                io,
                "Cycle length: ",
                length(cycle)
            )

            println(
                io,
                "\nCycle length >= n+1: ",
                cycle_length_ok ? "PASS" : "FAIL"
            )

            println(
                io,
                "Cycle has no repeated vertices: ",
                cycle_simple_ok ? "PASS" : "FAIL"
            )

            println(
                io,
                "Every consecutive pair is a directed graph edge: ",
                cycle_edges_ok ? "PASS" : "FAIL"
            )

            println(
                io,
                "Every cycle edge has double-sum value ",
                target,
                ": ",
                cycle_scores_ok ? "PASS" : "FAIL"
            )

            println(
                io,
                "All cycle vertices lie in one SCC: ",
                cycle_scc_ok ? "PASS" : "FAIL"
            )

            println(io, "\nVertices:")

            println(
                io,
                join(cycle, " -> "),
                " -> ",
                first(cycle)
            )

            println(io, "\nDice/tuples:")

            for v in cycle
                println(
                    io,
                    "  vertex ",
                    v,
                    ": ",
                    dice[v]
                )
            end

            println(
                io,
                "  vertex ",
                first(cycle),
                ": ",
                dice[first(cycle)]
            )

            println(io, "\nVerification of each directed cycle edge:")

            L = length(cycle)

            for i in 1:L

                u = cycle[i]
                v = cycle[mod1(i + 1, L)]
                score = cycle_scores[i]

                println(io)
                println(
                    io,
                    "  ",
                    dice[u],
                    " -> ",
                    dice[v]
                )

                println(
                    io,
                    "    double-sum value = ",
                    score,
                    "  ",
                    score == target ? "PASS" : "FAIL"
                )
            end
        end
    end

    println("Results written to: ", outfile)
end

# ============================================================
# Run
# ============================================================

main(10) 
\end{verbatim}

\section{Julia Output for \(n=7\)}
\begin{verbatim}
    ========================================================================

n = 7
Required edge score (n^2+1)/2  = 25
Required cycle length >= n+1 = 8

========================================================================
1. VERTICES
========================================================================
Total number of dice/vertices: 1716
Expected binomial(2n-1,n):     1716
Vertex-count verification:      PASS

========================================================================
2. EDGES
========================================================================
Total number of directed edges: 89153
Required double-sum value:       25
Exact neighbor generation uses:
  sum_s b_s = n
  sum_s w_s b_s = (n^2+1)/2
  b_s >= 0
where w_s = #{i : x_i > s}.

Generated-edge count agrees with graph edge count: PASS
Every directed edge satisfies the original double-sum condition: PASS

========================================================================
3. STRONGLY CONNECTED COMPONENTS
========================================================================
Total number of SCCs found by Tarjan's algorithm: 636

SCC sizes:
SCC 1: vertices = 1081, internal edges = 53389
SCC 2 through SCC 636 are all singleton thus trivial (edited to save room)

========================================================================
4. SCC PARTITION VERIFICATION
========================================================================
SCCs partition the full vertex set: PASS
Every directed cycle is therefore contained entirely within one SCC.
Number of SCCs having at least n+1 vertices: 1

========================================================================
5. SEARCH FOR A DIRECTED CYCLE OF LENGTH AT LEAST n+1
========================================================================
Cycle found: PASS
SCC containing cycle: 1
Size of containing SCC: 1081
Cycle length: 14

Cycle length >= n+1: PASS
Cycle has no repeated vertices: PASS
Every consecutive pair is a directed graph edge: PASS
Every cycle edge has double-sum value 25: PASS
All cycle vertices lie in one SCC: PASS

Dice/tuples:
  vertex 149: (1, 1, 1, 3, 3, 4, 7)
  vertex 929: (2, 2, 2, 2, 2, 2, 6)
  vertex 166: (1, 1, 1, 3, 5, 5, 5)
  vertex 247: (1, 1, 2, 2, 4, 4, 4)
  vertex 337: (1, 1, 3, 3, 3, 3, 3)
  vertex 479: (1, 2, 2, 2, 2, 5, 6)
  vertex 177: (1, 1, 1, 4, 4, 4, 5)
  vertex 338: (1, 1, 3, 3, 3, 3, 4)
  vertex 480: (1, 2, 2, 2, 2, 5, 7)
  vertex 178: (1, 1, 1, 4, 4, 4, 6)
  vertex 339: (1, 1, 3, 3, 3, 3, 5)
  vertex 482: (1, 2, 2, 2, 2, 6, 7)
  vertex 167: (1, 1, 1, 3, 5, 5, 6)
  vertex 951: (2, 2, 2, 2, 3, 4, 4)
  vertex 149: (1, 1, 1, 3, 3, 4, 7)


\end{verbatim}

\section{Julia Output for \(n=8\)}
\begin{verbatim}
    DIRECTED DICE GRAPH — EXACT LONG-CYCLE SEARCH
========================================================================

n = 8
Required edge score n^2/2 + 1 = 33
Required cycle length >= n+1 = 9

========================================================================
1. VERTICES
========================================================================
Total number of dice/vertices: 6435
Expected binomial(2n-1,n):     6435
Vertex-count verification:      PASS

========================================================================
2. EDGES
========================================================================
Total number of directed edges: 956307
Required double-sum value:       33
Exact neighbor generation uses:
  sum_s b_s = n
  sum_s w_s b_s = n^2/2 + 1
  b_s >= 0
where w_s = #{i : x_i > s}.

Generated-edge count agrees with graph edge count: PASS
Every directed edge satisfies the original double-sum condition: PASS

========================================================================
3. STRONGLY CONNECTED COMPONENTS
========================================================================
Total number of SCCs found by Tarjan's algorithm: 2115

SCC sizes:
SCC 1: vertices = 4321, internal edges = 674108
2114 trivial SCCs (one vertex, zero internal edges each, edit to save room).

========================================================================
4. SCC PARTITION VERIFICATION
========================================================================
SCCs partition the full vertex set: PASS
Every directed cycle is therefore contained entirely within one SCC.
Number of SCCs having at least n+1 vertices: 1

========================================================================
5. SEARCH FOR A DIRECTED CYCLE OF LENGTH AT LEAST n+1
========================================================================
Cycle found: PASS
SCC containing cycle: 1
Size of containing SCC: 4321
Cycle length: 29

Cycle length >= n+1: PASS
Cycle has no repeated vertices: PASS
Every consecutive pair is a directed graph edge: PASS
Every cycle edge has double-sum value 33: PASS
All cycle vertices lie in one SCC: PASS

Dice/tuples:
  vertex 496: (1, 1, 1, 2, 4, 6, 6, 6)
  vertex 1257: (1, 1, 3, 3, 3, 3, 3, 5)
  vertex 3482: (2, 2, 2, 2, 2, 4, 4, 4)
  vertex 560: (1, 1, 1, 3, 3, 3, 7, 8)
  vertex 3456: (2, 2, 2, 2, 2, 2, 6, 7)
  vertex 507: (1, 1, 1, 2, 5, 5, 5, 6)
  vertex 2200: (1, 2, 3, 3, 3, 4, 4, 4)
  vertex 3476: (2, 2, 2, 2, 2, 3, 6, 6)
  vertex 508: (1, 1, 1, 2, 5, 5, 5, 7)
  vertex 2235: (1, 2, 3, 3, 4, 4, 4, 4)
  vertex 1816: (1, 2, 2, 2, 3, 3, 6, 6)
  vertex 506: (1, 1, 1, 2, 5, 5, 5, 5)
  vertex 2305: (1, 2, 3, 4, 4, 4, 4, 4)
  vertex 2187: (1, 2, 3, 3, 3, 3, 4, 6)
  vertex 3497: (2, 2, 2, 2, 2, 5, 5, 5)
  vertex 611: (1, 1, 1, 3, 4, 4, 8, 8)
  vertex 3477: (2, 2, 2, 2, 2, 3, 6, 7)
  vertex 509: (1, 1, 1, 2, 5, 5, 5, 8)
  vertex 2431: (1, 2, 4, 4, 4, 4, 4, 4)
  vertex 590: (1, 1, 1, 3, 3, 6, 7, 7)
  vertex 3498: (2, 2, 2, 2, 2, 5, 5, 6)
  vertex 677: (1, 1, 1, 4, 4, 4, 6, 7)
  vertex 1267: (1, 1, 3, 3, 3, 3, 5, 6)
  vertex 3490: (2, 2, 2, 2, 2, 4, 5, 8)
  vertex 593: (1, 1, 1, 3, 3, 7, 7, 7)
  vertex 3457: (2, 2, 2, 2, 2, 2, 6, 8)
  vertex 633: (1, 1, 1, 3, 5, 5, 5, 6)
  vertex 2662: (1, 3, 3, 3, 3, 4, 4, 4)
  vertex 3647: (2, 2, 2, 3, 3, 3, 3, 7)
  vertex 496: (1, 1, 1, 2, 4, 6, 6, 6)


\end{verbatim}

\section{Julia Output for \(n=9\)}
\begin{verbatim}
    ========================================================================

n = 9
Required edge score (n^2+1)/2  = 41
Required cycle length >= n+1 = 10

========================================================================
1. VERTICES
========================================================================
Total number of dice/vertices: 24310
Expected binomial(2n-1,n):     24310
Vertex-count verification:      PASS

========================================================================
2. EDGES
========================================================================
Total number of directed edges: 12528131
Required double-sum value:       41
Exact neighbor generation uses:
  sum_s b_s = n
  sum_s w_s b_s = (n^2+1)/2 
  b_s >= 0
where w_s = #{i : x_i > s}.

Generated-edge count agrees with graph edge count: PASS
Every directed edge satisfies the original double-sum condition: PASS

========================================================================
3. STRONGLY CONNECTED COMPONENTS
========================================================================
Total number of SCCs found by Tarjan's algorithm: 3860

SCC sizes:
SCC 1: vertices = 20451, internal edges = 10933011
SCC 2 through SCC 3860 are all singleton thus trivial (edited to save room)

========================================================================
4. SCC PARTITION VERIFICATION
========================================================================
SCCs partition the full vertex set: PASS
Every directed cycle is therefore contained entirely within one SCC.
Number of SCCs having at least n+1 vertices: 1

========================================================================
5. SEARCH FOR A DIRECTED CYCLE OF LENGTH AT LEAST n+1
========================================================================
Cycle found: PASS
SCC containing cycle: 1
Size of containing SCC: 20451
Cycle length: 10

Cycle length >= n+1: PASS
Cycle has no repeated vertices: PASS
Every consecutive pair is a directed graph edge: PASS
Every cycle edge has double-sum value 41: PASS
All cycle vertices lie in one SCC: PASS

Dice/tuples:
  vertex 830: (1, 1, 1, 1, 3, 3, 3, 3, 7)
  vertex 12875: (2, 2, 2, 2, 2, 2, 2, 2, 6)
  vertex 966: (1, 1, 1, 1, 3, 5, 5, 5, 5)
  vertex 3208: (1, 1, 2, 2, 2, 4, 4, 4, 4)
  vertex 1623: (1, 1, 1, 2, 3, 3, 3, 3, 8)
  vertex 12876: (2, 2, 2, 2, 2, 2, 2, 2, 7)
  vertex 890: (1, 1, 1, 1, 3, 3, 6, 6, 6)
  vertex 12887: (2, 2, 2, 2, 2, 2, 2, 4, 5)
  vertex 864: (1, 1, 1, 1, 3, 3, 4, 5, 9)
  vertex 12877: (2, 2, 2, 2, 2, 2, 2, 2, 8)
  vertex 830: (1, 1, 1, 1, 3, 3, 3, 3, 7)
\end{verbatim}

\section{Julia Output for \(n=10\)}
\begin{verbatim}
 ========================================================================

n = 10
Required edge score n^2/2 + 1 = 51
Required cycle length >= n+1 = 11

========================================================================
1. VERTICES
========================================================================
Total number of dice/vertices: 92378
Expected binomial(2n-1,n):     92378
Vertex-count verification:      PASS

========================================================================
2. EDGES
========================================================================
Total number of directed edges: 157244770
Required double-sum value:       51
Exact neighbor generation uses:
  sum_s b_s = n
  sum_s w_s b_s = n^2/2 + 1
  b_s >= 0
where w_s = #{i : x_i > s}.

Generated-edge count agrees with graph edge count: PASS
Every directed edge satisfies the original double-sum condition: PASS

========================================================================
3. STRONGLY CONNECTED COMPONENTS
========================================================================
Total number of SCCs found by Tarjan's algorithm: 10588

SCC sizes:
SCC 1: vertices = 81791, internal edges = 147620052
SCC 2 through 10588 were all trivial (edit to save room)

4. SCC PARTITION VERIFICATION
========================================================================
SCCs partition the full vertex set: PASS
Every directed cycle is therefore contained entirely within one SCC.
Number of SCCs having at least n+1 vertices: 1

========================================================================
5. SEARCH FOR A DIRECTED CYCLE OF LENGTH AT LEAST n+1
========================================================================
Cycle found: PASS
SCC containing cycle: 1
Size of containing SCC: 81791
Cycle length: 110

Cycle length >= n+1: PASS
Cycle has no repeated vertices: PASS
Every consecutive pair is a directed graph edge: PASS
Every cycle edge has double-sum value 51: PASS
All cycle vertices lie in one SCC: PASS


Dice/tuples:
  vertex 2828: (1, 1, 1, 1, 2, 4, 4, 4, 4, 4)
  vertex 27314: (1, 2, 2, 3, 3, 3, 3, 3, 3, 3)
  vertex 24410: (1, 2, 2, 2, 2, 2, 2, 4, 7, 7)
  vertex 3375: (1, 1, 1, 1, 3, 3, 3, 6, 6, 6)
  vertex 24420: (1, 2, 2, 2, 2, 2, 2, 5, 5, 5)
  vertex 2829: (1, 1, 1, 1, 2, 4, 4, 4, 4, 5)
  vertex 30746: (1, 2, 3, 3, 3, 3, 3, 3, 3, 3)
  vertex 5446: (1, 1, 1, 2, 2, 2, 6, 7, 7, 7)
  vertex 2913: (1, 1, 1, 1, 2, 4, 5, 5, 5, 6)
  vertex 8017: (1, 1, 1, 3, 3, 3, 3, 3, 4, 4)
  vertex 48687: (2, 2, 2, 2, 2, 2, 2, 3, 6, 6)
  vertex 3039: (1, 1, 1, 1, 2, 5, 5, 5, 5, 6)
  vertex 24596: (1, 2, 2, 2, 2, 2, 4, 4, 4, 4)
  vertex 8011: (1, 1, 1, 3, 3, 3, 3, 3, 3, 5)
  vertex 48705: (2, 2, 2, 2, 2, 2, 2, 4, 4, 7)
  vertex 3376: (1, 1, 1, 1, 3, 3, 3, 6, 6, 7)
  vertex 48709: (2, 2, 2, 2, 2, 2, 2, 4, 5, 5)
  vertex 3345: (1, 1, 1, 1, 3, 3, 3, 4, 7, 8)
  vertex 48652: (2, 2, 2, 2, 2, 2, 2, 2, 6, 7)
  vertex 3649: (1, 1, 1, 1, 3, 4, 4, 5, 5, 6)
  vertex 27315: (1, 2, 2, 3, 3, 3, 3, 3, 3, 4)
  vertex 24414: (1, 2, 2, 2, 2, 2, 2, 4, 8, 8)
  vertex 3380: (1, 1, 1, 1, 3, 3, 3, 6, 7, 7)
  vertex 48730: (2, 2, 2, 2, 2, 2, 2, 5, 5, 5)
  vertex 5831: (1, 1, 1, 2, 2, 4, 4, 4, 4, 4)
  vertex 8012: (1, 1, 1, 3, 3, 3, 3, 3, 3, 6)
  vertex 48706: (2, 2, 2, 2, 2, 2, 2, 4, 4, 8)
  vertex 3346: (1, 1, 1, 1, 3, 3, 3, 4, 7, 9)
  vertex 48653: (2, 2, 2, 2, 2, 2, 2, 2, 6, 8)
  vertex 3040: (1, 1, 1, 1, 2, 5, 5, 5, 5, 7)
  vertex 24926: (1, 2, 2, 2, 2, 3, 4, 4, 4, 4)
  vertex 8013: (1, 1, 1, 3, 3, 3, 3, 3, 3, 7)
  vertex 24427: (1, 2, 2, 2, 2, 2, 2, 5, 6, 7)
  vertex 3648: (1, 1, 1, 1, 3, 4, 4, 5, 5, 5)
  vertex 51625: (2, 2, 2, 3, 3, 3, 3, 3, 3, 4)
  vertex 5447: (1, 1, 1, 2, 2, 2, 6, 7, 7, 8)
  vertex 3705: (1, 1, 1, 1, 3, 4, 5, 5, 5, 6)
  vertex 14452: (1, 1, 2, 3, 3, 3, 3, 3, 4, 4)
  vertex 48688: (2, 2, 2, 2, 2, 2, 2, 3, 6, 7)
  vertex 3041: (1, 1, 1, 1, 2, 5, 5, 5, 5, 8)
  vertex 25136: (1, 2, 2, 2, 2, 4, 4, 4, 4, 4)
  vertex 3347: (1, 1, 1, 1, 3, 3, 3, 4, 7, 10)
  vertex 48654: (2, 2, 2, 2, 2, 2, 2, 2, 6, 9)
  vertex 3706: (1, 1, 1, 1, 3, 4, 5, 5, 5, 7)
  vertex 27322: (1, 2, 2, 3, 3, 3, 3, 3, 4, 4)
  vertex 24415: (1, 2, 2, 2, 2, 2, 2, 4, 8, 9)
  vertex 3390: (1, 1, 1, 1, 3, 3, 3, 7, 7, 7)
  vertex 24426: (1, 2, 2, 2, 2, 2, 2, 5, 6, 6)
  vertex 4110: (1, 1, 1, 1, 4, 4, 4, 5, 5, 5)
  vertex 8018: (1, 1, 1, 3, 3, 3, 3, 3, 4, 5)
  vertex 48689: (2, 2, 2, 2, 2, 2, 2, 3, 6, 8)
  vertex 3042: (1, 1, 1, 1, 2, 5, 5, 5, 5, 9)
  vertex 25718: (1, 2, 2, 2, 3, 3, 4, 4, 4, 4)
  vertex 8014: (1, 1, 1, 3, 3, 3, 3, 3, 3, 8)
  vertex 24428: (1, 2, 2, 2, 2, 2, 2, 5, 6, 8)
  vertex 4090: (1, 1, 1, 1, 4, 4, 4, 4, 5, 6)
  vertex 14453: (1, 1, 2, 3, 3, 3, 3, 3, 4, 5)
  vertex 24417: (1, 2, 2, 2, 2, 2, 2, 4, 9, 9)
  vertex 3391: (1, 1, 1, 1, 3, 3, 3, 7, 7, 8)
  vertex 24441: (1, 2, 2, 2, 2, 2, 2, 6, 6, 6)
  vertex 3043: (1, 1, 1, 1, 2, 5, 5, 5, 5, 10)
  vertex 25928: (1, 2, 2, 2, 3, 4, 4, 4, 4, 4)
  vertex 3349: (1, 1, 1, 1, 3, 3, 3, 4, 8, 9)
  vertex 48657: (2, 2, 2, 2, 2, 2, 2, 2, 7, 8)
  vertex 2969: (1, 1, 1, 1, 2, 4, 6, 6, 6, 7)
  vertex 8024: (1, 1, 1, 3, 3, 3, 3, 3, 5, 5)
  vertex 24418: (1, 2, 2, 2, 2, 2, 2, 4, 9, 10)
  vertex 3394: (1, 1, 1, 1, 3, 3, 3, 7, 8, 8)
  vertex 48655: (2, 2, 2, 2, 2, 2, 2, 2, 6, 10)
  vertex 3707: (1, 1, 1, 1, 3, 4, 5, 5, 5, 8)
  vertex 30747: (1, 2, 3, 3, 3, 3, 3, 3, 3, 4)
  vertex 5448: (1, 1, 1, 2, 2, 2, 6, 7, 7, 9)
  vertex 3831: (1, 1, 1, 1, 3, 5, 5, 5, 5, 6)
  vertex 5832: (1, 1, 1, 2, 2, 4, 4, 4, 4, 5)
  vertex 8015: (1, 1, 1, 3, 3, 3, 3, 3, 3, 9)
  vertex 24429: (1, 2, 2, 2, 2, 2, 2, 5, 6, 9)
  vertex 4091: (1, 1, 1, 1, 4, 4, 4, 4, 5, 7)
  vertex 8019: (1, 1, 1, 3, 3, 3, 3, 3, 4, 6)
  vertex 48690: (2, 2, 2, 2, 2, 2, 2, 3, 6, 9)
  vertex 3044: (1, 1, 1, 1, 2, 5, 5, 5, 6, 6)
  vertex 26390: (1, 2, 2, 2, 4, 4, 4, 4, 4, 4)
  vertex 6353: (1, 1, 1, 2, 3, 3, 3, 4, 8, 10)
  vertex 48658: (2, 2, 2, 2, 2, 2, 2, 2, 7, 9)
  vertex 2970: (1, 1, 1, 1, 2, 4, 6, 6, 6, 8)
  vertex 14446: (1, 1, 2, 3, 3, 3, 3, 3, 3, 5)
  vertex 24419: (1, 2, 2, 2, 2, 2, 2, 4, 10, 10)
  vertex 3392: (1, 1, 1, 1, 3, 3, 3, 7, 7, 9)
  vertex 48710: (2, 2, 2, 2, 2, 2, 2, 4, 5, 6)
  vertex 3352: (1, 1, 1, 1, 3, 3, 3, 4, 9, 10)
  vertex 48661: (2, 2, 2, 2, 2, 2, 2, 2, 8, 9)
  vertex 2974: (1, 1, 1, 1, 2, 4, 6, 6, 7, 8)
  vertex 17878: (1, 1, 3, 3, 3, 3, 3, 3, 3, 5)
  vertex 24597: (1, 2, 2, 2, 2, 2, 4, 4, 4, 5)
  vertex 8016: (1, 1, 1, 3, 3, 3, 3, 3, 3, 10)
  vertex 24430: (1, 2, 2, 2, 2, 2, 2, 5, 6, 10)
  vertex 4092: (1, 1, 1, 1, 4, 4, 4, 4, 5, 8)
  vertex 8020: (1, 1, 1, 3, 3, 3, 3, 3, 4, 7)
  vertex 24442: (1, 2, 2, 2, 2, 2, 2, 6, 6, 7)
  vertex 3499: (1, 1, 1, 1, 3, 3, 5, 5, 5, 10)
  vertex 48707: (2, 2, 2, 2, 2, 2, 2, 4, 4, 9)
  vertex 3393: (1, 1, 1, 1, 3, 3, 3, 7, 7, 10)
  vertex 48715: (2, 2, 2, 2, 2, 2, 2, 4, 6, 6)
  vertex 3708: (1, 1, 1, 1, 3, 4, 5, 5, 5, 9)
  vertex 51632: (2, 2, 2, 3, 3, 3, 3, 3, 4, 4)
  vertex 5449: (1, 1, 1, 2, 2, 2, 6, 7, 7, 10)
  vertex 4111: (1, 1, 1, 1, 4, 4, 4, 5, 5, 6)
  vertex 8021: (1, 1, 1, 3, 3, 3, 3, 3, 4, 8)
  vertex 24432: (1, 2, 2, 2, 2, 2, 2, 5, 7, 8)
  vertex 3634: (1, 1, 1, 1, 3, 4, 4, 4, 6, 7)
  vertex 49910: (2, 2, 2, 2, 3, 3, 3, 3, 3, 5)
  vertex 2828: (1, 1, 1, 1, 2, 4, 4, 4, 4, 4)

\end{verbatim}

\end{document}